\numberwithin{equation}{section}
\begin{document}

\title[Inequalities of the Edmundson-Lah-Ribari\v c type]{Inequalities of the Edmundson-Lah-Ribari\v c type for $n$-convex functions with applications}
\author[R. Miki\' c]{Rozarija Miki\' c}

\address{Faculty of Textile Technology, University of Zagreb\\
Prilaz baruna Filipovi\' ca 28a\\
10 000 Zagreb\\
Croatia}

\email{rozarija.jaksic@ttf.hr}

\author[\dj . Pe\v cari\' c]{\dj ilda Pe\v cari\' c}
\address{Catholic University of Croatia\\
Ilica 242\\
10 000 Zagreb\\
Croatia}
\email{gildapeca@gmail.com}

\author[J. Pe\v cari\' c]{Josip Pe\v cari\' c}
\address{RUDN University\\ Miklukho-Maklaya str. 6\\ 117198 Moscow\\ Russia}
\email{pecaric@element.hr}

\subjclass[2010]{26A51, 15A39, 60E15}

\keywords{Jensen inequality, Edmundson-Lah-Ribari\v c inequality, $n$-convex functions, divided differences, $f$-divergence, Zipf-Mandelbrot law}

\begin{abstract}
In this paper we derive some Edmundson-Lah-Ribari\v c type inequalities for positive linear functionals and $n$-convex functions. Main results are applied to the generalized $f$-divergence functional. Examples with Zipf–Mandelbrot law are used to illustrate the results.
\end{abstract}

\maketitle

\section{Introduction}

\newtheorem{tm}{Theorem}[section]
\newtheorem{kor}{Corollary}[section]
\newtheorem{lema}{Lemma}[section]
\newtheorem{rem}{Remark}[section]
\newtheorem{primjer}{Example}[section]
\newtheorem{df}{Definition}[section]

Let $E$ be a non-empty set and let $L$ be a vector space of real-valued functions $f \colon E \to \mathbb{R}$
having the properties:
\begin{itemize}
\item[(L1)]
 $f,g \in L \Rightarrow (af+bg)\in L$ for all $a,b \in\mathbb{R}$;
\item[(L2)]
 $\boldsymbol{1} \in L$, i.e., if $f(t)=1$ for every $t \in E$, then $f \in L$. 
\end{itemize}

We also consider positive linear functionals $A \colon L \to \mathbb{R}$. That is, we assume that:
\begin{itemize}
\item[(A1)]
$A(af+bg)=aA(f)+bA(g)$ for $f,g \in L$ and $a,b \in \mathbb{R}$;
\item[(A2)]
$f \in L$, $f(t) \ge 0$ for every $t \in E \Rightarrow A(f) \ge 0$ ($A$ is positive). 
\end{itemize}

Since it was proved, the famous Jensen inequality and its converses have been extensively studied by many authors and have been generalized in numerous directions. Jessen \cite{jessen} gave the following generalization of Jensen's inequality for convex functions (see also \cite[p.47]{crvena}):
\begin{tm} $\mathrm{(}$\cite{jessen}$\mathrm{)}$
Let $L$ satisfy properties (L1) and (L2) on a nonempty set $E$, and assume that $f$ is a continous convex function 
on an interval $I \subset \mathbb{R}$. If $A$ is a positive linear functional with $A(1)=1$, then for all $g \in L$ such
that $f(g) \in L$ we have $A(g) \in I$ and
\begin{equation}\label{jensen}
f(A(g)) \le A(f (g)).
\end{equation} 
\end{tm}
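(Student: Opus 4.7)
The plan is to proceed in two steps: first establish that $A(g)\in I$, then use a supporting-line argument for the convex function $f$ at the point $c=A(g)$.

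For the first step, write $\alpha=\inf I$ and $\beta=\sup I$ (allowing infinite values). Since $g\in L$ and $f(g)\in L$, the values of $g$ must lie in the domain $I$ of $f$, so $\alpha\le g(t)\le\beta$ for every $t\in E$. Applying $A$ to the functions $g-\alpha\cdot\boldsymbol{1}$ and $\beta\cdot\boldsymbol{1}-g$ (which lie in $L$ by (L1) and (L2) and are nonnegative on $E$), positivity and linearity of $A$ together with $A(\boldsymbol{1})=1$ yield $\alpha\le A(g)\le\beta$. A short separate argument dealing with openness of the endpoints of $I$ (using that $g$ actually takes values in $I$ and excluding the degenerate case when $g$ is constant equal to an endpoint) upgrades this to $A(g)\in I$; I expect this endpoint bookkeeping to be the only genuinely delicate point of the argument.

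For the second step, set $c:=A(g)\in I$. Since $f$ is convex on $I$, it admits a supporting line at $c$: there exists a constant $\lambda\in\mathbb{R}$ (for instance, any element of $[f'_-(c),f'_+(c)]$ when $c$ lies in the interior of $I$, or the appropriate one-sided derivative at an endpoint) such that
\begin{equation*}
f(x)\ge f(c)+\lambda(x-c) \qquad \text{for every } x\in I.
\end{equation*}
Substituting $x=g(t)$ gives the pointwise inequality
\begin{equation*}
f(g(t))\ge f(c)+\lambda\bigl(g(t)-c\bigr), \qquad t\in E.
\end{equation*}

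Finally, I apply the functional $A$ to both sides. The right-hand side belongs to $L$ by (L1)--(L2), and the left-hand side belongs to $L$ by hypothesis, so by positivity of $A$ (applied to the difference, which is nonnegative on $E$) and by linearity together with $A(\boldsymbol{1})=1$,
\begin{equation*}
A(f(g))\ge f(c)\,A(\boldsymbol{1})+\lambda\bigl(A(g)-c\,A(\boldsymbol{1})\bigr)=f(c)=f(A(g)),
\end{equation*}
which is precisely \eqref{jensen}. The continuity hypothesis on $f$ is used implicitly to guarantee existence of the subgradient $\lambda$ at boundary points of $I$ if needed, but otherwise convexity alone drives the estimate.
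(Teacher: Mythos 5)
The paper states this result as a classical theorem of Jessen, with a citation to \cite{jessen} and \cite[p.47]{crvena}, and gives no proof of its own, so there is no internal argument to compare against. Your supporting-line proof is the standard one: the subgradient inequality $f(x)\ge f(c)+\lambda(x-c)$ at $c=A(g)$, followed by an application of the positive normalized functional $A$, is correct and complete whenever $c$ is an interior point of $I$, and your derivation of $\alpha\le A(g)\le\beta$ from positivity is fine.

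The genuine gap is precisely the endpoint bookkeeping you defer. First, the claim $A(g)\in I$ cannot be recovered by a ``short separate argument'' when $I$ is not closed: take $E=\mathbb{N}$, $L$ the space of convergent sequences, $A=\lim$, $I=(0,2)$, $g(n)=1/n$ and $f(x)=x^{2}$; every hypothesis holds, yet $A(g)=0\notin I$. Excluding the case where $g$ is constantly equal to an endpoint does not help, since here $g$ never attains the endpoint at all. The honest resolutions are either to restrict to closed intervals (which is all the present paper ever uses, since everything downstream takes $I=[a,b]$) or to interpret $f$ at a missing endpoint by its one-sided limit. Second, even when $A(g)$ is a closed endpoint of $I$, a continuous convex function need not admit a supporting line there: for $f(x)=-\sqrt{x}$ on $[0,1]$ one has $f'_{+}(0)=-\infty$, so there is no $\lambda$ with $f(x)\ge f(0)+\lambda x$ on $[0,1]$, and your parenthetical ``the appropriate one-sided derivative at an endpoint'' breaks down. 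The standard repair is to take supporting lines at interior points $c_{k}$ decreasing to $A(g)$ and pass to the limit, using continuity of $f$ at the endpoint to see that the tangent values converge to $f(A(g))$. Neither issue affects how the theorem is applied later in the paper, but as written your first step is asserted rather than proved, and the assertion is false in the generality in which you state it.
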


The following result is one of the most famous converses of the Jensen inequality known as the Edmundson-Lah-Ribari\v c inequality, and it was proved in \cite{beesack} by Beesack and Pe\v{c}ari\'{c} (see also \cite[p.98]{crvena}):
\begin{tm} $\mathrm{(}$\cite{beesack}$\mathrm{)}$
Let $f$ be convex on the interval $I=[a,b]$ such that $-\infty <a<b<\infty$. Let $L$ satisfy conditions (L1) and (L2) on $E$ and let $A$ be any
 positive linear functional on $L$ with $A(1)=1$. Then for every $g \in L$ such that $f(g) \in L$ (so that
$a \le g(t) \le b$ for all $t \in E$), we have
\begin{equation} \label{1}
A(f(g)) \le \frac{b-A(g)}{b-a}f(a)+\frac{A(g)-a}{b-a}f(b).
\end{equation} 
\end{tm}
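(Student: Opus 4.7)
The plan is to reduce the functional inequality to a pointwise inequality that every convex function satisfies on its chord, and then to apply the positive linear functional $A$ termwise.

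First, I would exploit the geometric characterization of convexity: for any $x$ in the interval $[a,b]$, write $x$ as the convex combination
$$x=\frac{b-x}{b-a}\,a+\frac{x-a}{b-a}\,b,$$
with nonnegative coefficients summing to $1$. Since $f$ is convex on $[a,b]$, applying the defining inequality of convexity (or equivalently, the observation that the graph of $f$ lies below the chord joining $(a,f(a))$ and $(b,f(b))$) yields
$$f(x)\le\frac{b-x}{b-a}f(a)+\frac{x-a}{b-a}f(b)\qquad\text{for all }x\in[a,b].$$

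Next, I would substitute $x=g(t)$, which is admissible for every $t\in E$ since $a\le g(t)\le b$ by hypothesis. This produces the pointwise inequality
$$f(g(t))\le\frac{b-g(t)}{b-a}f(a)+\frac{g(t)-a}{b-a}f(b),\qquad t\in E.$$
Rearranging the right-hand side, the difference of the two sides is a nonnegative function on $E$, and by the assumptions $f(g)\in L$ and $g\in L$ (together with $\boldsymbol{1}\in L$ from (L2) and linearity (L1)) this difference belongs to $L$.

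Finally, I would apply the functional $A$. Positivity (A2) preserves the inequality, and linearity (A1) together with the normalization $A(\boldsymbol{1})=1$ allows me to pull the constants $f(a)$, $f(b)$, $a$, $b$ outside and replace $A(g(\cdot))$ by $A(g)$, giving exactly \eqref{1}. The argument is essentially mechanical; the only mild point of care is verifying that all the intermediate functions (in particular $(b-g)/(b-a)$ and $(g-a)/(b-a)$) lie in $L$ so that $A$ may legitimately be applied, and this follows at once from (L1) and (L2). No real obstacle arises, since the convexity chord inequality does all the heavy lifting and the functional step merely propagates it.
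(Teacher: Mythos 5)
Your proof is correct and is the standard argument for this result; the paper itself states the theorem as a cited preliminary (from Beesack--Pe\v{c}ari\'{c}) without reproducing a proof, and the chord inequality followed by an application of the normalized positive linear functional is exactly how it is established in the cited source. The membership checks via (L1) and (L2) that you flag are the only points requiring care, and you handle them.
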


For some recent results on the converses of the Jensen inequality, the reader is referred to \cite{hor}, \cite{treci}, \cite{mon6}, \cite{hor1}, \cite{ivelic} and \cite{moh}.

Unlike the results from the above mentioned papers, which require convexity of the involved functions, the main objective of this paper is to obtain inequalities of the Edmundson-Lah-Ribari\v c type that hold for $n$-convex functions, which will also be a generalization of the results from \cite{prvi3conv} and \cite{prvi3conv_eksp}.

Definition of $n$-convex functions is characterized by $n$-th order divided differences.
The $n$-th order divided difference of a function $f\colon [a, b] \to \mathbb{R}$ at mutually distinct points $t_0, t_1, ... , t_n\in  [a, b]$ is defined recursively by
\begin{align*}
[t_i]f&=f(t_i), \ \ i=0,...,n,\\
[t_0,...,t_n]f&=\dfrac{[t_1,..., t_n]f-[t_0,..., t_{n-1}]f}{t_n-t_0}.
\end{align*}
The value $[t_0, . . . , t_n]f$ is independent of the order of the points $t_0, . . . , t_n$.

Definition of divided differences can be extended to include the cases in which some or all the points coincide (see e.g. \cite{agarwal}, \cite{crvena}):
\begin{equation*}
    f[\underbrace{a,...,a}_{n \ times}]=\dfrac{1}{(n-1)!}f^{(n-1)}(a), \ \ n\in \mathbb{N}.
\end{equation*}

A function $f\colon [a, b] \to \mathbb{R}$ is said to be $n$-convex ($n \ge 0$)
if and only if for all choices of $(n+1)$ distinct points $t_0, t_1, ... , t_n\in  [a, b]$, we have $[t_0, . . . , t_n]f\ge 0$.

The results in this paper are obtained by utilizing Hermite's interpolating polynomial, so first we need to give a definition and some properties (see \cite{agarwal}).

Let $-\infty <a<b<\infty$ and let $a\le a_1<a_2<...<a_r\le b$, where $r\ge 2$, be given points. For $f\in \mathcal{C}^n([a,b])$ there exists a unique polynomial $P_H(t)$, called Hermite's interpolating polynomial, of degree $(n-1)$ fulfilling Hermite's conditions:
$$P_H^{(i)}(a_j)=f^{(i)}(a_j): \ 0\le i\le k_j, \ 1\le j\le r, \ \sum_{j=1}^rk_j+r=n.$$
Among other special cases, these conditions include type $(m,n-m)$ conditions, which will be of special interest to us:
$$(r=2, \ 1\le m\le n-1, \ k_1=m-1, \ k_2=n-m-1)$$
\begin{align*}
    P_{mn}^{(i)}(a)&=f^{(i)}(a), \ 0\le i\le m-1\\
    P_{mn}^{(i)}(b)&=f^{(i)}(b), \ 0\le i\le n-m-1.
\end{align*}

To give a development of the interpolating polynomial in terms of divided differences, first let us assume that the function $f$ is also defined at a point $t\neq a_j, \ 1\le j\le n$. In \cite{agarwal} it is shown that
\begin{align}
    f(t)&=P(t)+R(t),\label{prikaz}
\end{align}
where 
\begin{align}
    P(t)=&f(a_1)+(t-a_1)f[a_1,a_2]+(t-a_1)(t-a_2)f[a_1,a_2,a_3]\nonumber\\
    &+...+(t-a_1)\cdot \cdot \cdot (t-a_{n-1})f[a_1,...,a_n]\label{polinom}
\end{align}
and
\begin{align}
    R(t)=&(t-a_1)\cdot \cdot \cdot (t-a_{n})f[t,a_1,...,a_n].\label{ostatak}
\end{align}

In case of $(m,n-m)$ conditions, (\ref{polinom}) and (\ref{ostatak}) become
\begin{align}
    P_{mn}(t)=&f(a)+(t-a)f[a,a]+...+(t-a)^{m-1}f[\underbrace{a,...,a}_{m \ times}]\nonumber\\
    &+(t-a)^mf[\underbrace{a,...,a}_{m \ times};b]+(t-a)^{m}(t-b)f[\underbrace{a,...,a}_{m \ times};b,b]\nonumber\\
    &+...+(t-a)^{m}(t-b)^{n-m-1}f[\underbrace{a,...,a}_{m \ times};\underbrace{b,b,...,b}_{(n-m) \ times}]\label{polinom1}
\end{align}
and
\begin{align}
    R_m(t)=&(t-a)^{m}(t-b)^{n-m}f[t;\underbrace{a,...,a}_{m \ times};\underbrace{b,b,...,b}_{(n-m) \ times}].\label{ostatak1}
\end{align}

This paper is organized in the following manner: main results, that are inequalities of the Edmundson-Lah-Ribari\v c type for $n$-convex functions, are given in Section \ref{sec2}; application of the main results to the generalized $f$-divergence functional is given in Section \ref{sec3}, and finally in section \ref{sec4} the results for the generalized $f$-divergence are applied to Zipf–Mandelbrot law.

\section{Results}\label{sec2}

Throughout this paper, whenever mentioning the interval $[a,b]$, we assume that $-\infty<a<b<\infty$ holds.

Let $L$ satisfy conditions $(L1)$ and $(L2)$ on a non-empty set $E$, let $A$ be any  positive linear functional on $L$ with $A(\boldsymbol{1})=1$, and let $g\in L$ be any function such that $g(E)\subseteq [a,b]$. For a given function $f\colon [a,b]\to \mathbb{R}$ denote:
\begin{align}
    LR(f,g,a,b,A)&=A(f(g))-\frac{b-A(g)}{b-a}f(a)-\frac{A(g)-a}{b-a}f(b).\label{LRoznaka}
\end{align}

Following representations of the left side in the Edmundson-Lah-Ribari\v c inequality is obtained by using Hermite's interpolating polynomials in terms of divided differences (\ref{polinom1}).

\begin{lema}\label{lema2}
Let $L$ satisfy conditions (L1) and (L2) on a non-empty set $E$ and let $A$ be any  positive linear functional on $L$ with $A(\boldsymbol{1})=1$. Let $f\in \mathcal{C}^n([a,b])$, and let $g\in L$ be any function such that $f\circ g\in L$. Then the following identities hold:
\begin{align}
    \bullet \ LR(f,&g,a,b,A)=
    \sum_{k=2}^{n-1}f[a;\underbrace{b,...,b}_{k \ times}]A\left[(g-a\boldsymbol{1})(g-b\boldsymbol{1})^{k-1}\right]+A(R_1(g)) \label{lema2_rez1}\\
    \bullet \ LR(f,&g,a,b,A)=f[a,a;b]A[(g-a\boldsymbol{1})(g-b\boldsymbol{1})]+\nonumber\\
&\sum_{k=2}^{n-2}f[a,a;\underbrace{b,...,b}_{k \ times}]A\left[(g-a\boldsymbol{1})^2(g-b\boldsymbol{1})^{k-1}\right]+A(R_2(g)) \label{lema2_rez2} \\
    \bullet \ LR(f,&g,a,b,A)=(A(g)-a)\left(f[a,a]-f[a,b]\right)+\sum_{k=2}^{m-1}\dfrac{f^{(k)}(a)}{k!}A\left[(g-a\boldsymbol{1})^{k}\right]\nonumber\\
    &+\sum_{k=1}^{n-m}f[\underbrace{a,...,a}_{m \ times};\underbrace{b,...,b}_{k \ times}]A\left[(g-a\boldsymbol{1})^{m}(g-b\boldsymbol{1})^{k-1}\right]+A(R_m(g)), \label{lema2_rez3}
\end{align}
where $m\ge 3$ and $R_m(\cdot )$ is defined in (\ref{ostatak1}).
\end{lema}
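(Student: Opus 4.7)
The plan is to start from the Hermite representation $f(t)=P_{mn}(t)+R_m(t)$ supplied by (\ref{prikaz}), (\ref{polinom1}), (\ref{ostatak1}), substitute $t\mapsto g$, and apply the positive linear functional $A$ termwise, using linearity (A1) together with $A(\boldsymbol{1})=1$. In each of the three cases the terms of $A(P_{mn}(g))$ split naturally into two groups: a handful of low-degree terms in $(g-a\boldsymbol{1})$ that must be reassembled into the expression $\frac{b-A(g)}{b-a}f(a)+\frac{A(g)-a}{b-a}f(b)$ appearing in $LR(f,g,a,b,A)$, and the remaining higher-order terms that match the sums on the right of (\ref{lema2_rez1})--(\ref{lema2_rez3}) verbatim.

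For (\ref{lema2_rez1}) ($m=1$) the low-degree group is just $f(a)+(A(g)-a)\,f[a;b]$, and the identity $f[a;b]=(f(b)-f(a))/(b-a)$ immediately yields $f(a)+(A(g)-a)\,f[a;b]=\frac{b-A(g)}{b-a}f(a)+\frac{A(g)-a}{b-a}f(b)$, after which (\ref{lema2_rez1}) drops out. For (\ref{lema2_rez2}) ($m=2$) the low-degree group is $f(a)+(A(g)-a)\,f'(a)+f[a,a;b]\,A[(g-a\boldsymbol{1})^2]$, and the key trick is the algebraic identity
\[
(g-a\boldsymbol{1})^2=(g-a\boldsymbol{1})(g-b\boldsymbol{1})+(b-a)(g-a\boldsymbol{1}),
\]
which after applying $A$ splits off the term $f[a,a;b]\,A[(g-a\boldsymbol{1})(g-b\boldsymbol{1})]$ kept separate in (\ref{lema2_rez2}); the residual piece $(b-a)(A(g)-a)\,f[a,a;b]$ combines with $f(a)+(A(g)-a)\,f'(a)$ to produce $\frac{b-A(g)}{b-a}f(a)+\frac{A(g)-a}{b-a}f(b)$, using $f[a,a;b]=(f[a,b]-f[a,a])/(b-a)$ and $f[a,a]=f'(a)$.

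For (\ref{lema2_rez3}) ($m\ge 3$) the pure-$a$ portion of $P_{mn}$ becomes the Taylor-like sum $\sum_{k=0}^{m-1}(g-a\boldsymbol{1})^k f^{(k)}(a)/k!$ via the convention $f[\underbrace{a,\ldots,a}_{k+1}]=f^{(k)}(a)/k!$. Only the $k=0,1$ pieces are involved in the rearrangement: a direct computation shows that $f(a)+(A(g)-a)f'(a)$ differs from $\frac{b-A(g)}{b-a}f(a)+\frac{A(g)-a}{b-a}f(b)$ by exactly $(A(g)-a)(f[a,a]-f[a,b])$, which is precisely the first term on the right of (\ref{lema2_rez3}); the $k=2,\ldots,m-1$ Taylor terms and the mixed $a,b$ terms assemble into the remaining two sums, and adding $A(R_m(g))$ completes the identity.

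I do not expect a real obstacle: the argument is careful bookkeeping of divided-difference expansions combined with one algebraic identity. The only mildly delicate step is the substitution $(g-a\boldsymbol{1})^2=(g-a\boldsymbol{1})(g-b\boldsymbol{1})+(b-a)(g-a\boldsymbol{1})$ in the case $m=2$, which is what converts the pure power $(g-a\boldsymbol{1})^2$ produced by $P_{2n}$ into the mixed form $(g-a\boldsymbol{1})(g-b\boldsymbol{1})$ required by the statement.
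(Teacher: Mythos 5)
Your proposal is correct and follows essentially the same route as the paper: substitute the Hermite representation of type $(m,n-m)$ in divided-difference form, rearrange the low-order terms into $\frac{b-t}{b-a}f(a)+\frac{t-a}{b-a}f(b)$, replace $t$ by $g$, and apply $A$. The algebraic identity $(g-a\boldsymbol{1})^2=(g-a\boldsymbol{1})(g-b\boldsymbol{1})+(b-a)(g-a\boldsymbol{1})$ together with $(b-a)f[a,a;b]=f[a,b]-f[a,a]$ is precisely the ``straightforward calculation'' the paper leaves implicit in passing to its intermediate identity for the case $m=2$.
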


\begin{proof} From representation (\ref{prikaz}) of every function $f\in \mathcal{C}^n([a,b])$ and its Hermite interpolating polynomial of type $(m,n-m)$ conditions in terms of divided differences (\ref{polinom1}) we have
\begin{align}
    f(t)&=f(a)+(t-a)f[a,a]+...+(t-a)^{m-1}f[\underbrace{a,...,a}_{m \ times}]\nonumber\\
    &+(t-a)^mf[\underbrace{a,...,a}_{m \ times};b]+(t-a)^{m}(t-b)f[\underbrace{a,...,a}_{m \ times};b,b]\nonumber\\
    &+...+(t-a)^{m}(t-b)^{n-m-1}f[\underbrace{a,...,a}_{m \ times};\underbrace{b,b,...,b}_{(n-m) \ times}]+R_m(t),\label{lema2_pom1}
\end{align}
where $R_m(\cdot )$ is defined in (\ref{ostatak1}). After some straightforward calculations, for different choices of $1\le m\le n-1$, from (\ref{lema2_pom1}) we get the following:
\begin{itemize}
    \item for $m=1$ it holds
\end{itemize}
\begin{align}
    LR(f,\boldsymbol{1},a,b,\mathrm{id})=&(t-a)(t-b)f[a;b,b]+(t-a)(t-b)^2f[a;b,b,b]\nonumber \\
    &+...+(t-a)(t-b)^{n-2}f[a;\underbrace{b,b,...,b}_{(n-1) \ times}]+R_1(t) \label{lema2_pom2a}
\end{align}
\begin{itemize}
    \item for $m=2$ it holds
\end{itemize}
\begin{align}
    LR(f,\boldsymbol{1},a,b,\mathrm{id})=&(t-a)(t-b)f[a,a;b]+(t-a)^2(t-b)f[a,a;b,b]\nonumber \\
&+...+(t-a)^2(t-b)^{n-3}f[a,a;\underbrace{b,b,...,b}_{(n-2) \ times}]+R_2(t) \label{lema2_pom2b} 
\end{align}
\begin{itemize}
    \item for $3\le m\le n-1$ it holds
\end{itemize}
\begin{align}
    LR(f,\boldsymbol{1},&a,b,\mathrm{id})=(t-a)\left(f[a,a]-f[a,b]\right)+...+(t-a)^{m-1}f[\underbrace{a,...,a}_{m \ times}]\nonumber\\
    &+(t-a)^mf[\underbrace{a,...,a}_{m \ times};b]+(t-a)^{m}(t-b)f[\underbrace{a,...,a}_{m \ times};b,b]\nonumber\\
    &+...+(t-a)^{m}(t-b)^{n-m-1}f[\underbrace{a,...,a}_{m \ times};\underbrace{b,b,...,b}_{(n-m) \ times}]+R_m(t).\label{lema2_pom2c}
\end{align}
Since $f\circ g\in L$ it holds $g(E)\subseteq [a,b]$, so we can replace $t$ with $g(t)$ in (\ref{lema2_pom2a}), (\ref{lema2_pom2b}) and (\ref{lema2_pom2c}), and thus obtain:
\begin{align*}
    LR(f,g,a,b,\mathrm{id})=&
    \sum_{k=2}^{n-1}(g(t)-a)(g(t)-b)^{k-1}f[a;\underbrace{b,...,b}_{k \ times}]+R_1(g(t)),
\end{align*}
\begin{align*}
    LR(f,g,a,&b,\mathrm{id})=(g(t)-a)(g(t)-b)f[a,a;b]+\\
&+\sum_{k=2}^{n-2}(g(t)-a)^2(g(t)-b)^{k-1}f[a,a;\underbrace{b,...,b}_{k \ times}]+R_2(g(t))
\end{align*}
and
\begin{align*}
    LR(f,g,&a,b,\mathrm{id})=(g(t)-a)\left(f[a,a]-f[a,b]\right)+\sum_{k=3}^m(g(t)-a)^{k-1}f[\underbrace{a,...,a}_{k \ times}]\nonumber\\
    &+\sum_{k=1}^{n-m}(g(t)-a)^{m}(g(t)-b)^{k-1}f[\underbrace{a,...,a}_{m \ times};\underbrace{b,...,b}_{k \ times}]+R_m(g(t)).
\end{align*}
Identities (\ref{lema2_rez1}), (\ref{lema2_rez2}) and (\ref{lema2_rez3}) follow by applying positive normalized linear functional $A$ to the previous equalities respectively. 
\end{proof}

\begin{lema}\label{lema3}
Let $L$ satisfy conditions (L1) and (L2) on a non-empty set $E$ and let $A$ be any  positive linear functional on $L$ with $A(\boldsymbol{1})=1$. Let $f\in \mathcal{C}^n([a,b])$, and let $g\in L$ be any function such that $f\circ g\in L$. Then the following identities hold:
\begin{align}
  \bullet \   LR(f,g,&a,b,A)=
    \sum_{k=2}^{n-1}f[b;\underbrace{a,...,a}_{k \ times}]A[(g-b\boldsymbol{1})(g-a\boldsymbol{1})^{k-1}]+A(R^{\ast}_1(g)) \label{lema3_rez1}\\
  \bullet \     LR(f,g,&a,b,A)=f[b,b;a]A[(g-b\boldsymbol{1})(g-a\boldsymbol{1})]\nonumber\\
&+\sum_{k=2}^{n-2}f[b,b;\underbrace{a,...,a}_{k \ times}]A[(g-b\boldsymbol{1})^2(g-a\boldsymbol{1})^{k-1}]+A(R^{\ast}_2(g)) \label{lema3_rez2}\\
 \bullet \  LR(f,g,&a,b,A)=(b-A(g))\left(f[a,b]-f[b,b]\right)+\sum_{k=2}^{m-1}\dfrac{f^{(k)}(b)}{k!}A[(g-b\boldsymbol{1})^{k}]\nonumber\\
    &+\sum_{k=1}^{n-m}f[\underbrace{b,...,b}_{m \ times};\underbrace{a,...,a}_{k \ times}]A[(g-b\boldsymbol{1})^{m}(g-a\boldsymbol{1})^{k-1}]+A(R^{\ast}_m(g))\label{lema3_rez3}
\end{align}
where $m\ge 3$ and
\begin{align}
    A(R^{\ast}_m(g))&=A[f[g;\underbrace{b\boldsymbol{1},...,b\boldsymbol{1}}_{m \ times};\underbrace{a\boldsymbol{1},...,a\boldsymbol{1}}_{(n-m) \ times}](g-b\boldsymbol{1})^{m}(g-a\boldsymbol{1})^{n-m}].\label{ostatak2}
\end{align}
\end{lema}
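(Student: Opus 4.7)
The plan is to follow the proof of Lemma \ref{lema2} essentially verbatim, with the roles of the points $a$ and $b$ interchanged in Hermite's interpolating polynomial: where Lemma \ref{lema2} places $m$ nodes at $a$ and $n-m$ nodes at $b$, I instead place $m$ nodes at $b$ and $n-m$ nodes at $a$. Concretely, starting from the representation (\ref{prikaz}) applied to this swapped configuration, I obtain the pointwise expansion
\begin{align*}
f(t)&=f(b)+(t-b)f[b,b]+\ldots+(t-b)^{m-1}f[\underbrace{b,...,b}_{m \ times}]\\
&\quad+(t-b)^mf[\underbrace{b,...,b}_{m \ times};a]+(t-b)^{m}(t-a)f[\underbrace{b,...,b}_{m \ times};a,a]\\
&\quad+\ldots+(t-b)^{m}(t-a)^{n-m-1}f[\underbrace{b,...,b}_{m \ times};\underbrace{a,a,...,a}_{(n-m) \ times}]+R^{\ast}_m(t),
\end{align*}
in which $R^{\ast}_m(t)$ is the pointwise counterpart of (\ref{ostatak2}) with the role of the multiple node moved from $a$ to $b$.

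Next, for the three subcases $m=1$, $m=2$, and $3\le m\le n-1$, I would subtract the affine interpolant $\tfrac{b-t}{b-a}f(a)+\tfrac{t-a}{b-a}f(b)$ from the above expansion. The key algebraic identity is
\begin{align*}
f(b)-\tfrac{b-t}{b-a}f(a)-\tfrac{t-a}{b-a}f(b)&=(b-t)f[a,b],
\end{align*}
which, combined with $f[b,b]=f'(b)$ and the standard identification $f[\underbrace{b,...,b}_{k+1 \ times}]=\tfrac{f^{(k)}(b)}{k!}$, shows that in the case $m\ge 3$ the low-order terms collapse to $(b-t)(f[a,b]-f[b,b])+\sum_{k=2}^{m-1}\tfrac{f^{(k)}(b)}{k!}(t-b)^k$, while in the cases $m=1,2$ the same simplification absorbs $f(b)$ directly into the leading mixed divided-difference summand, exactly as in the proof of Lemma \ref{lema2}.

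Finally, since $f\circ g\in L$ forces $g(E)\subseteq[a,b]$, I substitute $g(t)$ for $t$ in each of the three pointwise identities and apply the positive normalized linear functional $A$, using (A1), (A2) and $A(\boldsymbol{1})=1$, to obtain (\ref{lema3_rez1}), (\ref{lema3_rez2}) and (\ref{lema3_rez3}) respectively. The main obstacle is not conceptual but purely algebraic: the low-order bookkeeping in the $m\ge 3$ case described above must be verified carefully so that the combination $(b-A(g))(f[a,b]-f[b,b])$ emerges precisely as the leading term; once that is done, all remaining terms transcribe term-by-term from the proof of Lemma \ref{lema2} under the swap $a\leftrightarrow b$.
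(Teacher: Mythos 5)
Your argument is correct, but it is not the route the paper takes: the paper proves Lemma \ref{lema3} by reflection, introducing $F(t)=f(a+b-t)$, reusing the pointwise identities already established in the proof of Lemma \ref{lema2}, converting divided differences via $F[\underbrace{a,\dots,a}_{k};\underbrace{b,\dots,b}_{i}]=(-1)^{k+i-1}f[\underbrace{b,\dots,b}_{k};\underbrace{a,\dots,a}_{i}]$, and substituting $\bar g=a+b-g$ before applying $A$. You instead rerun the Newton--Hermite expansion directly with the $m$-fold node placed at $b$ and the $(n-m)$-fold node at $a$; this is legitimate because the Newton form and its remainder are independent of the ordering of the (possibly confluent) nodes, and it reproduces the remainder (\ref{ostatak2}) immediately. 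Your low-order bookkeeping checks out: $f(b)-\tfrac{b-t}{b-a}f(a)-\tfrac{t-a}{b-a}f(b)=(b-t)f[a,b]$ cancels against $(t-b)f[b;a]$ when $m=1$; for $m=2$ one has $(b-t)\left(f[a,b]-f[b,b]\right)+(t-b)^2f[b,b;a]=(t-b)(t-a)f[b,b;a]$, which is exactly the collapse needed to produce the leading term of (\ref{lema3_rez2}); and for $m\ge 3$ the terms assemble into $(b-t)(f[a,b]-f[b,b])+\sum_{k=2}^{m-1}\tfrac{f^{(k)}(b)}{k!}(t-b)^k$ as you claim. The trade-off is that your approach avoids the sign-tracking of the reflected divided differences and the auxiliary substitution $\bar g$, at the cost of redoing the interpolation expansion from scratch and of invoking (tacitly) the permutation-invariance of the Newton form, whereas the paper's reflection argument recycles Lemma \ref{lema2} wholesale. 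Either way the conclusion is the same, so your proof stands.
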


\begin{proof}
Let us define an auxiliary function $F\colon [a,b]\to \mathbb{R}$ with
$$F(t)=f(a+b-t).$$
Since $f\in \mathcal{C}^n([a,b])$ we immediately have $F\in \mathcal{C}^n([a,b])$, so we can apply (\ref{lema2_pom2a}), (\ref{lema2_pom2b}) and (\ref{lema2_pom2c}) to $F$ and obtain respectively
\begin{align}
    LR(F,\boldsymbol{1},&a,b,\mathrm{id})=\sum_{k=2}^{n-1}F[a;\underbrace{b,...,b}_{k \ times}](t-a)(t-b)^{k-1}+R_1(t) \label{lema3_pom1a}\\
    LR(F,\boldsymbol{1},&a,b,\mathrm{id})=F[a,a;b](t-a)(t-b)\nonumber\\
&+\sum_{k=2}^{n-2}F[a,a;\underbrace{b,...,b}_{k \ times}](t-a)^2(t-b)^{k-1}+R_2(t) \label{lema3_pom1b}\\
    LR(F,\boldsymbol{1},&a,b,\mathrm{id})=(t-a)\left(F[a,a]-F[a,b]\right)+\sum_{k=2}^{m-1}\dfrac{F^{(k)}(a)}{k!}(t-a)^{k}\nonumber\\
    &+\sum_{k=1}^{n-m}F[\underbrace{a,...,a}_{m \ times};\underbrace{b,...,b}_{k \ times}](t-a)^{m}(t-b)^{k-1}+R_m(t).\label{lema3_pom1c}
\end{align}
We can calculate divided differences of the function $F$ in terms of divided differences of the function $f$:
\begin{align*}
F[\underbrace{a,...,a}_{k \ times};\underbrace{b,...,b}_{i \ times}]&=(-1)^{k+i-1}f[\underbrace{b,...,b}_{k \ times};\underbrace{a,...,a}_{i \ times}].
\end{align*} 
Now (\ref{lema3_pom1a}), (\ref{lema3_pom1b}) and (\ref{lema3_pom1c}) become
\begin{align}
    LR(F,\boldsymbol{1},&a,b,\mathrm{id})=\sum_{k=2}^{n-1}(-1)^{k}f[b;\underbrace{a,...,a}_{k \ times}](t-a)(t-b)^{k-1}+\bar{R}_1(t) \label{lema3_pom2a}\\
    LR(F,\boldsymbol{1},&a,b,\mathrm{id})=(-1)^2f[b,b;a](t-a)(t-b)\nonumber\\
&+\sum_{k=2}^{n-2}(-1)^{k+1}f[b,b;\underbrace{a,...,a}_{k \ times}](t-a)^2(t-b)^{k-1}+\bar{R}_2(t) \label{lema3_pom2b}\\
    LR(F,\boldsymbol{1},&a,b,\mathrm{id})=(t-a)\left(-f[b,b]+f[a,b]\right)+\sum_{k=2}^{m-1}\dfrac{(-1)^kf^{(k)}(b)}{k!}(t-a)^{k}\nonumber\\
    &+\sum_{k=1}^{n-m}(-1)^{m+k-1}f[\underbrace{b,...,b}_{m \ times};\underbrace{a,...,a}_{k \ times}](t-a)^{m}(t-b)^{k-1}+\bar{R}_m(t),\label{lema3_pom2c}
\end{align}
where
\begin{align*}
    \bar{R}_m(t)&=(t-a)^{m}(t-b)^{n-m}(-1)^nf[a+b-t;\underbrace{b,...,b}_{m \ times};\underbrace{a,a,...,a}_{(n-m) \ times}].
\end{align*}
Let $g\in L$ be any function such that $f\circ g\in L$, that is, $a\le g(t)\le b$ for every $t\in E$. Let us define a function $\bar{g}(t)=a+b-g(t)$. Trivially, we have $a\le \bar{g}(t)\le b$ and $\bar{g}\in L$. Since
\begin{align*}
  LR(F,\bar{g},&a,b,\mathrm{id})=f(a+b-(a+b-g(t)))-\dfrac{b-(a+b-g(t))}{b-a}f(a+b-a)\\
  &-\dfrac{a+b-g(t)-a}{b-a}f(a+b-b)=LR(f,g,a,b,\mathrm{id}),  
\end{align*}
after putting $\bar{g}(t)$ in (\ref{lema3_pom2a}), (\ref{lema3_pom2b}) and (\ref{lema3_pom2c}) instead of $t$, we get
\begin{align*}
    LR(f,&g,a,b,\mathrm{id})=\sum_{k=2}^{n-1}(-1)^{k}f[b;\underbrace{a,...,a}_{k \ times}](b-g(t))(a-g(t))^{k-1}+\bar{R}_1(a+b-g(t))\\
    LR(f,&g,a,b,\mathrm{id})=(-1)^2f[b,b;a](b-g(t))(a-g(t))\nonumber\\
&+\sum_{k=2}^{n-2}(-1)^{k+1}f[b,b;\underbrace{a,...,a}_{k \ times}](b-g(t))^2(a-g(t))^{k-1}+\bar{R}_2(a+b-g(t))\\
    LR(f,&g,a,b,\mathrm{id})=(b-g(t))\left(-f[b,b]+f[a,b]\right)+\sum_{k=2}^{m-1}\dfrac{(-1)^kf^{(k)}(b)}{k!}(b-g(t))^{k}\nonumber\\
    &+\sum_{k=1}^{n-m}(-1)^{m+k-1}f[\underbrace{b,...,b}_{m \ times};\underbrace{a,...,a}_{k \ times}](b-g(t))^{m}(a-g(t))^{k-1}+\bar{R}_m(a+b-g(t)).
\end{align*}
Identities (\ref{lema3_rez1}), (\ref{lema3_rez2}) and (\ref{lema3_rez3}) follow after applying a normalized positive linear functional $A$ to previous equalities respectively. 
\end{proof}

Our first result is an upper bound for the difference in the Edmundson-Lah-Ribari\v c inequality, expressed by Hermite's interpolating polynomials in terms of divided differences.

\begin{tm}\label{tm2}
Let $L$ satisfy conditions (L1) and (L2) on a non-empty set $E$ and let $A$ be any  positive linear functional on $L$ with $A(\boldsymbol{1})=1$. Let $f\in \mathcal{C}^n([a,b])$, and let $g\in L$ be any function such that $f\circ g\in L$. If the function $f$ is $n$-convex and if $n$ and $m\ge 3$ are of different parity, then 
\begin{align}
    LR(f,g,a,&b,A)\le (A(g)-a)\left(f[a,a]-f[a,b]\right)+\sum_{k=2}^{m-1}\dfrac{f^{(k)}(a)}{k!}A\left[(g-a\boldsymbol{1})^{k}\right]\nonumber\\
    &+\sum_{k=1}^{n-m}f[\underbrace{a,...,a}_{m \ times};\underbrace{b,...,b}_{k \ times}]A\left[(g-a\boldsymbol{1})^{m}(g-b\boldsymbol{1})^{k-1}\right]. \label{tm2_rez1}
\end{align}
Inequality (\ref{tm2_rez1}) also holds when the function $f$ is $n$-concave and $n$ and $m$ are of equal parity. In case when the function $f$ is $n$-convex and $n$ and $m$ are of equal parity, or when the function $f$ is $n$-concave and $n$ and $m$ are of different parity, the inequality sign in (\ref{tm2_rez1}) is reversed.
\end{tm}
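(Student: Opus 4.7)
The plan is to read off the theorem directly from identity (\ref{lema2_rez3}) in Lemma \ref{lema2}, reducing the task to a sign analysis of the remainder term $A(R_m(g))$. Indeed, once we know
\begin{align*}
LR(f,g,a,b,A) = &(A(g)-a)\bigl(f[a,a]-f[a,b]\bigr)+\sum_{k=2}^{m-1}\frac{f^{(k)}(a)}{k!}A\bigl[(g-a\boldsymbol{1})^{k}\bigr]\\
&+\sum_{k=1}^{n-m}f[\underbrace{a,\ldots,a}_{m};\underbrace{b,\ldots,b}_{k}]A\bigl[(g-a\boldsymbol{1})^{m}(g-b\boldsymbol{1})^{k-1}\bigr]+A(R_m(g)),
\end{align*}
the claimed inequality (\ref{tm2_rez1}) is equivalent to $A(R_m(g))\le 0$, and the reversed inequality to $A(R_m(g))\ge 0$. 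Since $A$ is a positive linear functional, it suffices in each case to establish the corresponding pointwise sign of the integrand $R_m(g(t))$ for every $t\in E$.

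Recall from (\ref{ostatak1}) that
$$R_m(g(t))=(g(t)-a)^{m}(g(t)-b)^{n-m}\,f[g(t);\underbrace{a,\ldots,a}_{m};\underbrace{b,\ldots,b}_{n-m}].$$
The three factors can be analyzed separately. Because $g(E)\subseteq[a,b]$, we have $g(t)-a\ge 0$, so $(g(t)-a)^{m}\ge 0$ irrespective of the parity of $m$. The factor $(g(t)-b)^{n-m}$ is non-negative when $n-m$ is even and non-positive when $n-m$ is odd, since $g(t)-b\le 0$. Finally, the divided difference $f[g(t);a,\ldots,a;b,\ldots,b]$ is an $n$-th order divided difference of $f$ (involving $n+1$ nodes counted with multiplicity), so for $f\in\mathcal{C}^n([a,b])$ it coincides with a value of $f^{(n)}$ divided by $n!$ (by the Hermite-Genocchi formula, or by continuity of divided differences), and hence is $\ge 0$ when $f$ is $n$-convex and $\le 0$ when $f$ is $n$-concave.

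Combining these observations, $R_m(g(t))$ has the sign of $(-1)^{n-m}$ when $f$ is $n$-convex and the opposite sign when $f$ is $n$-concave. Therefore:
\begin{itemize}
\item if $f$ is $n$-convex and $n,m$ are of different parity, then $n-m$ is odd and $R_m(g(t))\le 0$;
\item if $f$ is $n$-concave and $n,m$ are of equal parity, then $n-m$ is even and again $R_m(g(t))\le 0$;
\item in the remaining two cases, $R_m(g(t))\ge 0$.
\end{itemize}
Applying the positive linear functional $A$ transfers each pointwise inequality to $A(R_m(g))$, and substituting into (\ref{lema2_rez3}) yields (\ref{tm2_rez1}) together with its reverse in the indicated cases. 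The only mildly subtle point — and the one I would justify explicitly in a full write-up — is the extension of the $n$-convexity condition from distinct nodes to the divided difference with repeated nodes appearing in $R_m$; everything else is an almost mechanical sign check on top of Lemma \ref{lema2}.
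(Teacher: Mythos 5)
Your proof is correct and follows essentially the same route as the paper: it starts from identity (\ref{lema2_rez3}), factors the pointwise remainder $R_m(g(t))$ into $(g(t)-a)^m$, $(g(t)-b)^{n-m}$ and the $n$-th order divided difference, and carries out the same parity-based sign analysis before applying the positive functional $A$. Your explicit flagging of the extension of $n$-convexity to divided differences with repeated nodes is a point the paper passes over silently, but it does not change the argument.
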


\begin{proof} We start with the representation of the left side in the Edmundson-Lah-Ribari\v c inequality (\ref{lema2_rez3}) from Lemma \ref{lema2} with a special focus on the last term:
\begin{align*}
    A(R(g))&=A\left(\left(g-a\boldsymbol{1}\right)^{m}\left(g-b\boldsymbol{1}\right)^{n-m}f[g;\underbrace{a\boldsymbol{1},...,a\boldsymbol{1}}_{m \ times};\underbrace{b\boldsymbol{1},...,b\boldsymbol{1}}_{(n-m) \ times}]\right).
\end{align*}
Since $A$ is positive, it preserves the sign, so we need to study the sign of the expression:
$$\left(g(t)-a\right)^{m}\left(g(t)-b\right)^{n-m}f[g(t);\underbrace{a,...,a}_{m \ times};\underbrace{b,b,...,b}_{(n-m) \ times}].$$

Since $a\le g(t)\le b$ for every $t\in E$, we have $\left(g(t)-a\right)^{m}\ge 0$ for every $t\in E$ and any choice of $m$. For the same reason we have $(g(t)-b)\le 0$. Trivially it follows that $(g(t)-b)^{n-m}\le 0$ when $n$ and $m$ are of different parity, and $(g(t)-b)^{n-m}\ge 0$ when $n$ and $m$ are of equal parity.

If the function $f$ is $n$-convex, then $f[g(t);\underbrace{a,...,a}_{m \ times};\underbrace{b,b,...,b}_{(n-m) \ times}]\ge 0$, and if the function $f$ is $n$-concave, then $f[g(t);\underbrace{a,...,a}_{m \ times};\underbrace{b,b,...,b}_{(n-m) \ times}]\le 0$.

Now (\ref{tm2_rez1}) easily follows from (\ref{lema2}).
\end{proof}

Following result provides us with a similar upper bound for the difference in the Edmundson-Lah-Ribari\v c inequality, and it is obtained from Lemma \ref{lema3}.

\begin{tm}\label{tm4}
Let $L$ satisfy conditions (L1) and (L2) on a non-empty set $E$ and let $A$ be any  positive linear functional on $L$ with $A(\boldsymbol{1})=1$. Let $f\in \mathcal{C}^n([a,b])$, and let $g\in L$ be any function such that $f\circ g\in L$. If the function $f$ is $n$-convex and if $m\ge 3$ is odd, then 
\begin{align}
    LR(f,g,a,&b,A)\le (b-A(g))\left(f[a,b]-f[b,b]\right)+\sum_{k=2}^{m-1}\dfrac{f^{(k)}(b)}{k!}A[(g-b\boldsymbol{1})^{k}]\nonumber\\
    &+\sum_{k=1}^{n-m}f[\underbrace{b,...,b}_{m \ times};\underbrace{a,...,a}_{k \ times}]A[(g-b\boldsymbol{1})^{m}(g-a\boldsymbol{1})^{k-1}]\label{tm4_rez1}
\end{align}
Inequality (\ref{tm4_rez1}) also holds when the function $f$ is $n$-concave and $m$ is even. In case when the function $f$ is $n$-convex and $m$ is even, or when the function $f$ is $n$-concave and $m$ is odd, the inequality sign in (\ref{tm4_rez1}) is reversed.
\end{tm}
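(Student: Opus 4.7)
The strategy is to mirror the proof of Theorem \ref{tm2}, but starting from identity (\ref{lema3_rez3}) of Lemma \ref{lema3} instead of (\ref{lema2_rez3}) of Lemma \ref{lema2}. The right-hand side of (\ref{tm4_rez1}) is exactly identity (\ref{lema3_rez3}) with the remainder term $A(R^{\ast}_m(g))$ removed, so the whole proof reduces to determining the sign of that remainder under the various hypotheses.

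Using the explicit form (\ref{ostatak2}), I would look pointwise at the expression
\[
(g(t)-b)^{m}(g(t)-a)^{n-m}f[g(t);\underbrace{b,\ldots,b}_{m\ \text{times}};\underbrace{a,\ldots,a}_{(n-m)\ \text{times}}].
\]
Since $a \le g(t) \le b$ for every $t \in E$, the factor $(g(t)-a)^{n-m}$ is nonnegative regardless of parity, whereas $(g(t)-b)^m$ is nonpositive when $m$ is odd and nonnegative when $m$ is even. The divided difference is an $n$-th order divided difference of $f$ (the ordering of the nodes is immaterial), so it is nonnegative when $f$ is $n$-convex and nonpositive when $f$ is $n$-concave.

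Combining these sign considerations gives four cases: $A(R^{\ast}_m(g)) \le 0$ when $f$ is $n$-convex with $m$ odd or when $f$ is $n$-concave with $m$ even, and $A(R^{\ast}_m(g)) \ge 0$ in the two remaining cases. Because $A$ is a positive linear functional, the pointwise sign of the integrand transfers to $A(R^{\ast}_m(g))$. Substituting back into (\ref{lema3_rez3}) immediately yields (\ref{tm4_rez1}) and its reversed form, matching the statement of the theorem.

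The only point that requires a small amount of attention, as opposed to being completely routine, is the parity bookkeeping. In Theorem \ref{tm2} the remainder from (\ref{lema2_rez3}) carried the factor $(g-b)^{n-m}$, so its sign depended on the parity of $n-m$, which is why the hypothesis there was phrased in terms of $n$ and $m$ having equal or different parity. In the present situation the remainder in (\ref{ostatak2}) instead carries the factor $(g-b)^{m}$, so only the parity of $m$ enters, which explains why $n$ drops out of the hypothesis of Theorem \ref{tm4}. Beyond that observation there is no real obstacle.
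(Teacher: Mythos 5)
Your proposal is correct and follows essentially the same route as the paper: start from identity (\ref{lema3_rez3}), analyse the pointwise sign of $(g(t)-b)^{m}(g(t)-a)^{n-m}$ times the $n$-th order divided difference in the remainder (\ref{ostatak2}), and use the positivity of $A$ to drop $A(R^{\ast}_m(g))$ with the appropriate inequality sign. Your closing remark explaining why only the parity of $m$ (and not of $n-m$) matters here is exactly the right bookkeeping and matches the paper's case analysis.
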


\begin{proof} Similarly as in the proof of the previous theorem, we start with the representation of the left side in the Edmundson-Lah-Ribari\v c inequality (\ref{lema3_rez3}) from Lemma \ref{lema3} with a special focus on the last term:
\begin{align*}
    A(R^{\ast}_m(g))&=A\left(f[g;\underbrace{b\boldsymbol{1},...,b\boldsymbol{1}}_{m \ times};\underbrace{a\boldsymbol{1},...,a\boldsymbol{1}}_{(n-m) \ times}](g-b\boldsymbol{1})^{m}(g-a\boldsymbol{1})^{n-m}\right)
\end{align*}
As before, because of the positivity of the linear functional $A$, we only need to study the sign of the expression:
$$(g(t)-b)^{m}(g(t)-a)^{n-m}f[g(t);\underbrace{b,...,b}_{m \ times};\underbrace{a,a,...,a}_{(n-m) \ times}].$$

Since $a\le g(t)\le b$ for every $t\in E$, we have $\left(g(t)-a\right)^{n-m}\ge 0$ for every $t\in E$ and any choice of $m$. For the same reason we have $(g(t)-b)\le 0$. Trivially it follows that $(g(t)-b)^{m}\le 0$ when $m$ is odd, and $(g(t)-b)^{m}\ge 0$ when $m$ is even.

If the function $f$ is $n$-convex, then its $n$-th order divided differences are greater of equal to zero, and if the function $f$ is $n$-concave, then its $n$-th order divided differences are less or equal to zero.

Now (\ref{tm4_rez1}) easily follows from Lemma (\ref{lema3}).
\end{proof}

\begin{kor}\label{kor1}
Let $L$ satisfy conditions (L1) and (L2) on a non-empty set $E$ and let $A$ be any  positive linear functional on $L$ with $A(\boldsymbol{1})=1$. Let $n$ be an odd number, let $f\in \mathcal{C}^n([a,b])$, and let $g\in L$ be any function such that $f\circ g\in L$. If the function $f$ is $n$-convex and if $m\ge 3$ is odd, then 
\begin{align}
&(A(g)-a)\left(f[a,a]-f[a,b]\right)+\sum_{k=2}^{m-1}\dfrac{f^{(k)}(a)}{k!}A\left[(g-a\boldsymbol{1})^{k}\right]\nonumber\\
    & \ \ \ \ +\sum_{k=1}^{n-m}f[\underbrace{a,...,a}_{m \ times};\underbrace{b,...,b}_{k \ times}]A\left[(g-a\boldsymbol{1})^{m}(g-b\boldsymbol{1})^{k-1}\right]\nonumber\\
    \le LR(&f,g,a,b,A)\le (b-A(g))\left(f[a,b]-f[b,b]\right)+\sum_{k=2}^{m-1}\dfrac{f^{(k)}(b)}{k!}A[(g-b\boldsymbol{1})^{k}]\nonumber\\
    & \ \ \ \ +\sum_{k=1}^{n-m}f[\underbrace{b,...,b}_{m \ times};\underbrace{a,...,a}_{k \ times}]A[(g-b\boldsymbol{1})^{m}(g-a\boldsymbol{1})^{k-1}].\label{kor1_rez1}
\end{align}
Inequality (\ref{kor1_rez1}) also holds when the function $f$ is $n$-concave and $m$ is even. In case when the function $f$ is $n$-convex and $m$ is even, or when the function $f$ is $n$-concave and $m$ is odd, the inequality signs in (\ref{kor1_rez1}) are reversed.
\end{kor}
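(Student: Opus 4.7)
The plan is to derive both inequalities in (\ref{kor1_rez1}) as immediate consequences of Theorems \ref{tm2} and \ref{tm4}, restricted to the present hypotheses $n$ odd and $m \ge 3$ odd. No new estimates are required; the content of the corollary is entirely the bookkeeping of the parity conditions so that the two bounds sandwich $LR(f,g,a,b,A)$ in the correct order for each combination of convexity/concavity of $f$ and parity of $m$.

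I would start with the main case: $f$ is $n$-convex and $m$ is odd. For the upper bound in (\ref{kor1_rez1}) I invoke Theorem \ref{tm4} directly, since its hypothesis ``$f$ is $n$-convex and $m \ge 3$ is odd'' is met, and the conclusion (\ref{tm4_rez1}) coincides verbatim with the desired upper bound. For the lower bound I invoke Theorem \ref{tm2}: with $n$ and $m$ both odd they have \emph{equal} parity, so the ``reversed'' clause of that theorem applies to the $n$-convex function $f$, and the reversed form of (\ref{tm2_rez1}) is precisely the lower bound in (\ref{kor1_rez1}). Chaining these two one-sided inequalities yields the two-sided bound.

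The remaining three cases follow the same template. For $f$ $n$-concave with $m$ even, $n$ and $m$ have different parity; Theorem \ref{tm2} then reverses for an $n$-concave function (producing the lower bound), while Theorem \ref{tm4} applies directly for ``$n$-concave with $m$ even'' (producing the upper bound), so (\ref{kor1_rez1}) holds as stated. In the two ``flipped'' configurations ($f$ $n$-convex with $m$ even, and $f$ $n$-concave with $m$ odd), the direction of each of Theorems \ref{tm2} and \ref{tm4} flips simultaneously, producing the two-sided inequality with both $\le$ signs reversed, which is exactly the final assertion of the corollary.

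The only place to be careful is that the parity condition in Theorem \ref{tm2} involves the relative parity of $n$ and $m$, whereas the condition in Theorem \ref{tm4} involves the parity of $m$ alone. Once $n$ is fixed odd as in the corollary, both collapse to a parity-of-$m$ dichotomy, which is why the four-case table closes cleanly; I do not expect any analytic obstacle beyond this tabulation.
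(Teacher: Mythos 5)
Your proposal is correct and is exactly the intended argument: the paper states this corollary without a separate proof precisely because it follows by combining Theorem \ref{tm2} (whose reversed clause, triggered by $n$ and $m$ having equal parity when both are odd, gives the lower bound) with Theorem \ref{tm4} (which gives the upper bound directly for $m$ odd), and your four-case parity tabulation for the remaining configurations is accurate.
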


\begin{rem}
In \cite[Theorem 2.3]{prvi3conv_eksp} is proved that for a 3-convex functions we have
 \begin{align*}
 &(A(g)-a)\left[f'(a)-\dfrac{f(b)-f(a)}{b-a}\right]+\dfrac{f''(a)}{2}A[(g-a\boldsymbol{1})^2], \nonumber\\
\le  &LR(f,g,a,b,A)\le   (b-A(g))\left[\dfrac{f(b)-f(a)}{b-a}-f'(b)\right]+\dfrac{f''(b)}{2}A[(b\boldsymbol{1}-g)^2] \nonumber
 \end{align*}
 and if the function $f$ is 3-concave, then the inequality signs are reversed. 
It is obvious that inequalities (\ref{kor1_rez1}) from Corollary \ref{kor1} provide us with a generalization of the result stated above.
\end{rem}

Next result gives us an upper and a lower bound for the difference in the Edmundson-Lah-Ribari\v c inequality expressed by Hermite's interpolating polynomials in terms of divided differences, and it is obtained from Lemma \ref{lema2}.

\begin{tm}\label{tm3}
Let $L$ satisfy conditions (L1) and (L2) on a non-empty set $E$ and let $A$ be any  positive linear functional on $L$ with $A(\boldsymbol{1})=1$. Let $f\in \mathcal{C}^n([a,b])$, and let $g\in L$ be any function such that $f\circ g\in L$. If the function $f$ is $n$-convex and if $n$ is odd, then 
\begin{align}
    &\sum_{k=2}^{n-1}f[a;\underbrace{b,...,b}_{k \ times}]A\left[(g-a\boldsymbol{1})(g-b\boldsymbol{1})^{k-1}\right]\le LR(f,g,a,b,A)\label{tm3_rez1}\\
    \le &f[a,a;b]A[(g-a\boldsymbol{1})(g-b\boldsymbol{1})]
+\sum_{k=2}^{n-2}f[a,a;\underbrace{b,...,b}_{k \ times}]A\left[(g-a\boldsymbol{1})^2(g-b\boldsymbol{1})^{k-1}\right].\nonumber
\end{align}
Inequalities (\ref{tm3_rez1}) also hold when the function $f$ is $n$-concave and $n$ is even. In case when the function $f$ is $n$-convex and $n$ is even, or when the function $f$ is $n$-concave and $n$ is odd, the inequality signs in (\ref{tm3_rez1}) are reversed.
\end{tm}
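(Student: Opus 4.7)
The plan is to read off the upper and lower bounds directly from the two representations of $LR(f,g,a,b,A)$ given by identities \eqref{lema2_rez1} and \eqref{lema2_rez2} in Lemma \ref{lema2}. The lower bound in \eqref{tm3_rez1} will be obtained from \eqref{lema2_rez1}, while the upper bound will be obtained from \eqref{lema2_rez2}; in each case everything reduces to determining the sign of the remainder term.

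First I would focus on the remainder $A(R_1(g))$ arising from \eqref{lema2_rez1}. By definition \eqref{ostatak1} with $m=1$,
\[
R_1(g(t))=(g(t)-a)(g(t)-b)^{n-1}\,f\bigl[g(t);a;\underbrace{b,\dots,b}_{n-1\ \text{times}}\bigr].
\]
Since $a\le g(t)\le b$ for every $t\in E$, we have $g(t)-a\ge 0$ always, while $g(t)-b\le 0$, so the sign of $(g(t)-b)^{n-1}$ depends on the parity of $n-1$. If $n$ is odd, then $n-1$ is even and $(g(t)-b)^{n-1}\ge 0$. The $n$-convexity of $f$ forces the $(n+1)$-point divided difference $f[g(t);a;b,\dots,b]$ to be non-negative, so $R_1(g(t))\ge 0$ pointwise, and positivity of $A$ yields $A(R_1(g))\ge 0$. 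Substituting into \eqref{lema2_rez1} gives the lower bound in \eqref{tm3_rez1}.

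For the upper bound I would analyse $A(R_2(g))$ from \eqref{lema2_rez2}, where
\[
R_2(g(t))=(g(t)-a)^{2}(g(t)-b)^{n-2}\,f\bigl[g(t);a,a;\underbrace{b,\dots,b}_{n-2\ \text{times}}\bigr].
\]
The factor $(g(t)-a)^2$ is non-negative; when $n$ is odd, $n-2$ is odd, so $(g(t)-b)^{n-2}\le 0$, whereas $n$-convexity again gives $f[g(t);a,a;b,\dots,b]\ge 0$. Hence $R_2(g(t))\le 0$ pointwise, and by positivity $A(R_2(g))\le 0$, which plugged into \eqref{lema2_rez2} gives the upper bound.

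The other three cases in the statement are handled identically: flipping $n$-convexity to $n$-concavity reverses the sign of the divided difference, while changing the parity of $n$ flips the sign of $(g(t)-b)^{n-1}$ and $(g(t)-b)^{n-2}$ simultaneously, so a single parity/concavity swap reverses both inequalities, while swapping both leaves them intact. The main thing to keep straight is just the bookkeeping of these parities; no further estimation is required since everything collapses to sign checks on factors of the form $(g-a)$, $(g-b)$ and an $n$-th order divided difference of $f$.
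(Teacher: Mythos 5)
Your proposal is correct and follows essentially the same route as the paper: both use the identities \eqref{lema2_rez1} and \eqref{lema2_rez2} from Lemma \ref{lema2} and reduce the claim to the sign of the remainders $A(R_1(g))$ and $A(R_2(g))$, determined by the parity of the exponent on $(g-b)$ together with the sign of the $n$-th order divided difference. The only cosmetic difference is that the paper cites the sign discussion already carried out in the proof of Theorem \ref{tm2} for general $m$, whereas you redo it explicitly for $m=1$ and $m=2$; the parity bookkeeping for the remaining cases is also handled correctly.
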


\begin{proof} From the discussion about positivity and negativity of the term $A(R_m(g))$ in the proof of Theorem \ref{tm2}, for $m=1$ it follows that 
\begin{itemize}
    \item [$\ast$] $A(R_1(g))\ge 0$ when the function $f$ is $n$-convex and $n$ is odd, or when $f$ is $n$-concave and $n$ even;
    \item [$\ast$] $A(R_1(g))\le 0$ when the function $f$ is $n$-concave and $n$ is odd, or when $f$ is $n$-convex and $n$ even.
\end{itemize} 
Now the identity (\ref{lema2_rez1}) gives us
\begin{align*}
    LR(f,g,a,b,A)\ge &f[a;b,b]A[(g-a\boldsymbol{1})(g-b\boldsymbol{1})]+f[a;b,b,b]A[(g-a\boldsymbol{1})(g-b\boldsymbol{1})^2]\nonumber \\
    &+...+f[a;\underbrace{b,b,...,b}_{(n-1) \ times}]A\left[(g-a\boldsymbol{1})(g-b\boldsymbol{1})^{n-2}\right]
\end{align*}
for $A(R_1(g))\ge 0$, and in case $A(R_1(g))\le 0$ the inequality sign is reversed.

In the same manner, for $m=2$ it follows that 
\begin{itemize}
    \item [$\ast$] $A(R_2(g))\le 0$ when the function $f$ is $n$-convex and $n$ is odd, or when $f$ is $n$-concave and $n$ even;
    \item [$\ast$] $A(R_2(g))\ge 0$ when the function $f$ is $n$-concave and $n$ is odd, or when $f$ is $n$-convex and $n$ even.
\end{itemize} 
In this case the identity (\ref{lema2_rez2}) for $A(R_2(g))\le 0$ gives us
\begin{align*}
    LR(f,g,a,b,A)\le &f[a,a;b]A[(g-a\boldsymbol{1})(g-b\boldsymbol{1})]+f[a,a;b,b]A[(g-a\boldsymbol{1})^2(g-b\boldsymbol{1})]\nonumber \\
&+...+f[a,a;\underbrace{b,b,...,b}_{(n-2) \ times}]A\left[(g-a\boldsymbol{1})^2(g-b\boldsymbol{1})^{n-3}\right]
\end{align*}
and in case $A(R_2(g))\ge 0$ the inequality sign is reversed.

When we combine the two results from above, we get exactly (\ref{tm3_rez1}).
\end{proof}

By utilizing Lemma \ref{lema3} we can get similar bounds for the difference in the Edmundson-Lah-Ribari\v c inequality that hold for all $n\in \mathbb{N}$, not only the odd ones.

\begin{tm}\label{tm5}
Let $L$ satisfy conditions (L1) and (L2) on a non-empty set $E$ and let $A$ be any  positive linear functional on $L$ with $A(\boldsymbol{1})=1$. Let $f\in \mathcal{C}^n([a,b])$, and let $g\in L$ be any function such that $f\circ g\in L$. If the function $f$ is $n$-convex, then 
\begin{align}
  &f[b,b;a]A[(g-b\boldsymbol{1})(g-a\boldsymbol{1})]
+\sum_{k=2}^{n-2}f[b,b;\underbrace{a,...,a}_{k \ times}]A[(g-b\boldsymbol{1})^2(g-a\boldsymbol{1})^{k-1}]\nonumber\\
   \le &LR(f,g,a,b,A)\le 
   \sum_{k=1}^{n-1}f[b;\underbrace{a,...,a}_{k \ times}]A[(g-b\boldsymbol{1})(g-a\boldsymbol{1})^{k-1}].\label{tm5_rez1}
\end{align}
If the function $f$ is $n$-concave, the inequality signs in (\ref{tm5_rez1}) are reversed.
\end{tm}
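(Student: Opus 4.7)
The plan is to transplant the strategy used in the proof of Theorem \ref{tm3} from Lemma \ref{lema2} to Lemma \ref{lema3}; the whole argument reduces to determining the signs of the remainder terms $A(R^{\ast}_1(g))$ and $A(R^{\ast}_2(g))$ appearing in identities (\ref{lema3_rez1}) and (\ref{lema3_rez2}).

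I would begin with the upper bound. Using identity (\ref{lema3_rez1}), the relevant remainder $A(R^{\ast}_1(g))$ has integrand containing the factor $(g-b\boldsymbol{1})(g-a\boldsymbol{1})^{n-1}$ together with an $n$-th order divided difference of $f$ (compare (\ref{ostatak2}) with $m=1$). Since $a\le g(t)\le b$ on $E$, we have $g(t)-b\le 0$ and $g(t)-a\ge 0$, so $(g(t)-a)^{n-1}\ge 0$ for every $n$, and therefore $(g(t)-b)(g(t)-a)^{n-1}\le 0$. Because $f$ is $n$-convex, the divided difference is nonnegative, the integrand is nonpositive, and the positivity of $A$ forces $A(R^{\ast}_1(g))\le 0$. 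Substituting this back into (\ref{lema3_rez1}) delivers the upper estimate in (\ref{tm5_rez1}).

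For the lower bound I would instead use identity (\ref{lema3_rez2}). The corresponding remainder $A(R^{\ast}_2(g))$ carries the factor $(g-b\boldsymbol{1})^2(g-a\boldsymbol{1})^{n-2}$, which is nonnegative since $(g(t)-b)^2\ge 0$ and $(g(t)-a)^{n-2}\ge 0$, once again irrespective of parity. Combined with the nonnegativity of the $n$-th order divided difference coming from $n$-convexity, this gives $A(R^{\ast}_2(g))\ge 0$, and (\ref{lema3_rez2}) then yields the lower estimate. The $n$-concave case is immediate: every divided difference changes sign, so both inequalities reverse.

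The notable point, and the reason the theorem avoids the parity hypothesis needed in Theorem \ref{tm3}, is that the controlling factor in each remainder is now a power of $g-a\boldsymbol{1}$ rather than $g-b\boldsymbol{1}$. Since $g-a\ge 0$ on $E$, the sign of $(g-a\boldsymbol{1})^{n-1}$ or $(g-a\boldsymbol{1})^{n-2}$ is nonnegative for \emph{every} $n$, whereas the analogous factor $(g-b\boldsymbol{1})^{n-m}$ used in the proof of Theorem \ref{tm3} flips with the parity of $n-m$. Beyond this observation I do not foresee any genuine obstacle; once Lemma \ref{lema3} is in hand, the argument is a clean sign check.
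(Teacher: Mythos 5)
Your proof is correct and follows essentially the same route as the paper: both derive the upper bound from identity (\ref{lema3_rez1}) via $A(R^{\ast}_1(g))\le 0$ and the lower bound from identity (\ref{lema3_rez2}) via $A(R^{\ast}_2(g))\ge 0$, using exactly the sign analysis you describe (the factor $(g-a\boldsymbol{1})^{n-m}\ge 0$ being the reason no parity hypothesis is needed). In fact your matching of each identity to its inequality direction is cleaner than the paper's own write-up, in which the two intermediate displayed inequalities are inadvertently swapped.
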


\begin{proof} We return to the discussion about positivity and negativity of the term $A(R^{\ast}_m(g))$ in the proof of Theorem \ref{tm4}. For $m=1$ we have
$$(g(t)-b)^1(g(t)-a)^{n-1}\le 0 \ \ \mathrm{for \ every} \ t\in E,$$
so $A(R^{\ast}_1(g))\ge 0$ when the function $f$ is $n$-concave, and $A(R^{\ast}_1(g))\le 0$ when the function $f$ is $n$-convex.
Now the identity (\ref{lema3_rez1}) for a $n$-convex function $f$ gives us
\begin{align*}
    LR(f,g,a,b,A)\ge &f[b,b;a]A[(g-b\boldsymbol{1})(g-a\boldsymbol{1})]+f[b,b;a,a]A[(g-b\boldsymbol{1})^2(g-a\boldsymbol{1})]\nonumber \\
& \ \ \ \ +...+f[b,b;\underbrace{a,a,...,a}_{(n-2) \ times}]A[(g-b\boldsymbol{1})^2(g-a\boldsymbol{1})^{n-3}]
\end{align*}
 and if the function $f$ is $n$-concave, the inequality sign is reversed.

Similarly, for $m=2$ we have
$$(g(t)-b)^2(g(t)-a)^{n-2}\ge 0 \ \ \mathrm{for \ every} \ t\in E,$$
so $A(R^{\ast}_2(g))\ge 0$ when the function $f$ is $n$-convex, and $A(R^{\ast}_2(g))\le 0$ when the function $f$ is $n$-concave.
In this case the identity (\ref{lema3_rez2}) for a $n$-convex function $f$ gives us
\begin{align*}
    LR(f,g,a,b,A)\le &f[b;a,a]A[(g-b\boldsymbol{1})(g-a\boldsymbol{1})]+f[b;a,a,a]A[(g-b\boldsymbol{1})(g-a\boldsymbol{1})^2]\nonumber \\
    & \ \ \ \ +...+f[b;\underbrace{a,a,...,a}_{(n-1) \ times}]A[(g-b\boldsymbol{1})(g-a\boldsymbol{1})^{n-2}]
\end{align*}
 and if the function $f$ is $n$-concave, the inequality sign is reversed.

When we combine the two results from above, we get exactly (\ref{tm5_rez1}).
\end{proof}

\begin{rem}
Since
\begin{align*}
    f[a;b,b]&=\dfrac{1}{b-a}\left(f'(b)-\dfrac{f(b)-f(a)}{b-a}\right) \\ f[a,a;b]&=\dfrac{1}{b-a}\left(f'(b)-\dfrac{f(b)-f(a)}{b-a}\right),
\end{align*}
when we take $n=3$ in (\ref{tm3_rez1}) or (\ref{tm5_rez1}), we get that
\begin{align}
    &\dfrac{A[(g-a\boldsymbol{1})(g-b\boldsymbol{1})]}{b-a}\left(f'(b)-\dfrac{f(b)-f(a)}{b-a}\right)\label{stari1}\\
    \le &LR(f,g,a,b,A) \le \dfrac{A[(g-a\boldsymbol{1})(g-b\boldsymbol{1})]}{b-a}\left(f'(b)-\dfrac{f(b)-f(a)}{b-a}\right)\nonumber
\end{align}
holds for a 3-convex function, and for a 3-concave function the inequality signs are reversed. Inequalities (\ref{stari1}) are proved in \cite[Theorem 2.1]{prvi3conv_eksp}, so it follows that Theorem \ref{tm3} and Theorem \ref{tm5} give a generalization of a result from (\cite{prvi3conv_eksp}).
\end{rem}

\section{Applications to Csisz\' ar divergence}\label{sec3}

Let us denote the set of all probability distributions by $\mathbb{P}$, that is we say $\boldsymbol{p}=(p_1,...,p_r)\in \mathbb{P}$ if $p_i\in [0,1]$ for $i=1,...,r$ and $\sum_{i=1}^rp_i=1$. 

Numerous theoretic divergence measures between two
probability distributions have been introduced and comprehensively studied. Their applications can be found in the analysis of contingency tables \cite{10}, in approximation of probability distributions \cite{6}, \cite{21}, in signal processing \cite{14}, and in pattern recognition \cite{3}, \cite{4}.

Csisz\' ar \cite{csiszar1}-\cite{csiszar2} introduced the $f-$divergence functional as
\begin{equation}\label{fdiv}
D_f(\boldsymbol{p},\boldsymbol{q})=\sum_{i=1}^rq_if\left(\frac{p_i}{q_i}\right),
\end{equation}
where $f\colon [0,+\infty \rangle$ is a convex function, and it represent a "distance function" on the set of probability distributions $\mathbb{P}$.

A great number of theoretic divergences are special cases of Csisz\' ar $f$-divergence for different choices of the function $f$.

As in Csisz\' ar \cite{csiszar2}, we interpret undefined expressions by
$$f(0)=\lim_{t \to 0^+}f(t), \ \ 0\cdot f\left(\dfrac{0}{0}\right)=0,$$
$$0\cdot f\left(\dfrac{a}{0}\right)=\lim_{\epsilon \to 0^+}f\left(\dfrac{a}{\epsilon}\right)=a\cdot \lim_{t \to \infty}\dfrac{f(t)}{t}.$$

In this section our intention is to derive mutual bounds for the generalized $f$-divergence functional in described setting. In such a way, we will obtain 
some new reverse relations for the generalized $f$-divergence functional that correspond to the class of $n$-convex functions. It is a generalization of the results obtained in \cite{prvi3conv_eksp}. Throughout this section, when mentioning the interval $[a,b]$, we assume that $[a,b]\subseteq \mathbb{R}_+$. For a $n$-convex function $f\colon [m,M]\to \mathbb{R}$ we give the following definition of generalized $f$-divergence functional:
\begin{equation}\label{fdivg}
\tilde{D}_f(\boldsymbol{p},\boldsymbol{q})=\sum_{i=1}^rq_if\left(\frac{p_i}{q_i}\right).
\end{equation}

The first result in this section is carried out by virtue of our Theorem \ref{tm2}.

\begin{tm}\label{cd_tm2}
Let $[a,b]\subset \mathbb{R}$ be an interval such that $a\le 1\le b$. Let $f\in \mathcal{C}^n([a,b])$ and let $\boldsymbol{p}=(p_1,...,p_r)$ and $\boldsymbol{p}=(q_1,...,q_r)$ be probability distributions such that $p_i/q_i\in [a,b]$ for every $i=1,...,r$. If the function $f$ is $n$-convex and if $n$ and $3\le m\le n-1$ are of different parity, then 
\begin{align}
   &\dfrac{b-1}{b-a}f(a)+\dfrac{1-a}{b-a}f(b)-\tilde{D}_f(\boldsymbol{p},\boldsymbol{q})\\
    \le &\left(1-a\right)\left(f[a,a]-f[a,b]\right)+\sum_{k=2}^{m-1}\dfrac{f^{(k)}(a)}{k!}\sum_{i=1}^r\dfrac{(p_i-aq_i)^{k}}{q_i^{k-1}}\nonumber\\
    &+\sum_{k=1}^{n-m}f[\underbrace{a,...,a}_{m \ times};\underbrace{b,...,b}_{k \ times}]\sum_{i=1}^r\dfrac{(p_i-aq_i)^{m}(p_i-aq_i)^{k-1}}{q_i^{m+k-2}}. \label{cd_tm2_rez1}
\end{align}
Inequality (\ref{cd_tm2_rez1}) also holds when the function $f$ is $n$-concave and $n$ and $m$ are of equal parity. In case when the function $f$ is $n$-convex and $n$ and $m$ are of equal parity, or when the function $f$ is $n$-concave and $n$ and $m$ are of different parity, the inequality sign in (\ref{cd_tm2_rez1}) is reversed.
\end{tm}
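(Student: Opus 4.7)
The plan is to realize Theorem \ref{cd_tm2} as a direct specialization of Theorem \ref{tm2} to a discrete setting. Take $E=\{1,2,\dots,r\}$, let $L$ be the vector space of all real-valued functions on $E$ (so $(L1)$ and $(L2)$ hold trivially), and define
\begin{equation*}
A(h)=\sum_{i=1}^{r}q_i\,h(i),\qquad h\in L.
\end{equation*}
Positivity is inherited from $q_i\ge 0$, and $A(\boldsymbol{1})=\sum_{i=1}^{r}q_i=1$, so the normalization required by Theorem \ref{tm2} is met. Choose $g\in L$ by $g(i)=p_i/q_i$; the standing assumption $p_i/q_i\in[a,b]$ gives $g(E)\subseteq[a,b]$, and because $L$ consists of all real functions on a finite set, $f\circ g\in L$ automatically. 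Direct computation then yields
\begin{equation*}
A(g)=\sum_{i=1}^{r}p_i=1,\qquad A(f\circ g)=\sum_{i=1}^{r}q_i\,f\!\left(\frac{p_i}{q_i}\right)=\tilde{D}_f(\boldsymbol{p},\boldsymbol{q}),
\end{equation*}
so the quantity $LR(f,g,a,b,A)$ defined in \eqref{LRoznaka} reduces (up to sign) to the difference appearing on the left-hand side of \eqref{cd_tm2_rez1}.

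The second step is purely algebraic: convert each functional moment on the right-hand side of \eqref{tm2_rez1} into an explicit sum over $i$. The pure powers give
\begin{equation*}
A\bigl[(g-a\boldsymbol{1})^{k}\bigr]=\sum_{i=1}^{r}q_i\!\left(\frac{p_i}{q_i}-a\right)^{\!k}=\sum_{i=1}^{r}\frac{(p_i-aq_i)^{k}}{q_i^{k-1}},
\end{equation*}
and analogously the mixed moments become
\begin{equation*}
A\bigl[(g-a\boldsymbol{1})^{m}(g-b\boldsymbol{1})^{k-1}\bigr]=\sum_{i=1}^{r}\frac{(p_i-aq_i)^{m}(p_i-bq_i)^{k-1}}{q_i^{m+k-2}},
\end{equation*}
the exponent $m+k-2$ on $q_i$ arising because the single weight $q_i$ from $A$ absorbs one power of $q_i^{-(m+k-1)}$. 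Substituting these identities, together with $A(g)-a=1-a$, into \eqref{tm2_rez1} reproduces exactly the right-hand side of \eqref{cd_tm2_rez1}. The parity dichotomy between $n$-convex/$n$-concave $f$ and the parities of $n$ and $m$ transfers verbatim, since it is a statement purely about $f$, $n$ and $m$, unaffected by the specialization of $A$ and $g$.

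There is essentially no analytic obstacle here; the only things requiring mild care are the bookkeeping of the $q_i$-exponent in the mixed moment (the single $q_i$ weight from $A$ reduces $q_i^{-(m+k-1)}$ to $q_i^{-(m+k-2)}$), and propagating the inequality direction correctly under the parity cases. One may also wish to invoke the Csisz\'ar conventions recalled in Section \ref{sec3} to handle, as limits, any formal $q_i=0$ terms, so that $\tilde{D}_f(\boldsymbol{p},\boldsymbol{q})$ remains well-defined throughout.
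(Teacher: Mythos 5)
Your proposal is correct and follows essentially the same route as the paper: the paper also specializes Theorem \ref{tm2} to the discrete functional $A(\boldsymbol{x})=\sum_{i=1}^r p_i x_i$ (then substitutes weights $q_i$ and points $x_i=p_i/q_i$, so that $A(g)=\sum_i p_i=1$), and converts the moments into the explicit sums exactly as you do. Your bookkeeping of the exponent $q_i^{m+k-2}$ is right, and in fact your mixed-moment formula with $(p_i-bq_i)^{k-1}$ corrects an evident typo in the statement of \eqref{cd_tm2_rez1}, where the factor is misprinted as $(p_i-aq_i)^{k-1}$.
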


\begin{proof} Let $\boldsymbol{x}=(x_1,...,x_r)$ be such that $x_i\in [a,b]$ for $i=1,...,r$. In the relation (\ref{tm2_rez1}) we can replace
$$g\longleftrightarrow \boldsymbol{x}, \ \ \mathrm{and} \ \  A(\boldsymbol{x})=\sum_{i=1}^rp_ix_i.$$
In that way we get 
\begin{align*}
&\dfrac{b-\bar{x}}{b-a}f(a)+\dfrac{\bar{x}-a}{b-a}f(b)-\sum_{i=1}^rp_if(x_i)\\
    \le &\left(\bar{x}-a\right)\left(f[a,a]-f[a,b]\right)+\sum_{k=2}^{m-1}\dfrac{f^{(k)}(a)}{k!}\sum_{i=1}^rp_i(x_i-a)^{k}\nonumber\\
    &+\sum_{k=1}^{n-m}f[\underbrace{a,...,a}_{m \ times};\underbrace{b,...,b}_{k \ times}]\sum_{i=1}^rp_i(x_i-a)^{m}(x_i-b)^{k-1}, 
\end{align*}
where $\bar{x}=\sum_{i=1}^np_ix_i$. In the previous relation we can set
$$p_i=q_i \ \ \mathrm{and} \ \ x_i=\dfrac{p_i}{q_i},$$
and after calculating 
$$\bar{x}=\sum_{i=1}^nq_i\dfrac{p_i}{q_i}=\sum_{i=1}^np_i=1$$
we get (\ref{cd_tm2_rez1}).
\end{proof}

By utilizing Theorem \ref{tm4} in the analogous way as above, we get an Edmundson-Lah-Ribari\v c type inequality for the generalized $f$-divergence functional (\ref{fdivg}) which does not depend on parity of $n$, and it is given in the following theorem.

\begin{tm}\label{cd_tm4}
Let $[a,b]\subset \mathbb{R}$ be an interval such that $a\le 1\le b$. Let $f\in \mathcal{C}^n([a,b])$ and let $\boldsymbol{p}=(p_1,...,p_r)$ and $\boldsymbol{p}=(q_1,...,q_r)$ be probability distributions such that $p_i/q_i\in [a,b]$ for every $i=1,...,r$. If the function $f$ is $n$-convex and if $3\le m\le n-1$ is odd, then 
\begin{align}
    &\dfrac{b-1}{b-a}f(a)+\dfrac{1-a}{b-a}f(b)-\tilde{D}_f(\boldsymbol{p},\boldsymbol{q})\nonumber \\
   \le &(b-1)\left(f[a,b]-f[b,b]\right)+\sum_{k=2}^{m-1}\dfrac{f^{(k)}(b)}{k!}\sum_{i=1}^r\dfrac{(p_i-bq_i)^{k}}{q_i^{k-1}}\nonumber\\
    &+\sum_{k=1}^{n-m}f[\underbrace{b,...,b}_{m \ times};\underbrace{a,...,a}_{k \ times}]\sum_{i=1}^r\dfrac{(p_i-bq_i)^{m}(p_i-aq_i)^{k-1}}{q_i^{m+k-2}}\label{cd_tm4_rez1}
\end{align}
Inequality (\ref{cd_tm4_rez1}) also holds when the function $f$ is $n$-concave and $m$ is even. In case when the function $f$ is $n$-convex and $m$ is even, or when the function $f$ is $n$-concave and $m$ is odd, the inequality sign in (\ref{cd_tm4_rez1}) is reversed.
\end{tm}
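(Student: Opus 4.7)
The plan is to follow the exact same template used in the proof of Theorem \ref{cd_tm2}, now starting from Theorem \ref{tm4} instead of Theorem \ref{tm2}. The theorem is an immediate specialization of the abstract functional inequality to a discrete probability setting, so no new ideas are needed — the work is purely a translation between the two languages.

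First, I would fix the discrete framework: take $E=\{1,\dots,r\}$, let $L$ be all real-valued functions on $E$, and let $A$ be the positive linear functional $A(\boldsymbol{x})=\sum_{i=1}^r p_i x_i$, which satisfies $A(\boldsymbol{1})=1$ because $\boldsymbol{p}\in\mathbb{P}$. Then $g=\boldsymbol{x}=(x_1,\dots,x_r)$ with $x_i\in[a,b]$ is an admissible function. Under this substitution $A(g)=\bar{x}:=\sum p_i x_i$, while $A[(g-b\boldsymbol{1})^k]=\sum p_i(x_i-b)^k$ and $A[(g-b\boldsymbol{1})^m(g-a\boldsymbol{1})^{k-1}]=\sum p_i(x_i-b)^m(x_i-a)^{k-1}$. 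Plugging these into inequality (\ref{tm4_rez1}) yields
\begin{align*}
 &\tfrac{b-\bar x}{b-a}f(a)+\tfrac{\bar x-a}{b-a}f(b)-\sum_{i=1}^r p_i f(x_i)\\
 \le\,& (b-\bar x)\bigl(f[a,b]-f[b,b]\bigr)+\sum_{k=2}^{m-1}\tfrac{f^{(k)}(b)}{k!}\sum_{i=1}^r p_i(x_i-b)^{k}\\
 &+\sum_{k=1}^{n-m}f[\underbrace{b,\dots,b}_{m\text{ times}};\underbrace{a,\dots,a}_{k\text{ times}}]\sum_{i=1}^r p_i(x_i-b)^{m}(x_i-a)^{k-1},
\end{align*}
valid in the convex/parity configuration of Theorem \ref{tm4}.

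Next I would specialize to the divergence setting by putting $p_i\leftarrow q_i$ and $x_i=p_i/q_i$. Then $\bar x=\sum_{i=1}^r q_i\cdot(p_i/q_i)=\sum p_i=1$, which produces the left-hand side $\frac{b-1}{b-a}f(a)+\frac{1-a}{b-a}f(b)-\widetilde D_f(\boldsymbol{p},\boldsymbol{q})$ and the first coefficient $b-1$. The key algebraic step is the identity
\begin{equation*}
 q_i\!\left(\tfrac{p_i}{q_i}-b\right)^{\!m}\!\left(\tfrac{p_i}{q_i}-a\right)^{\!k-1}
 =\frac{(p_i-bq_i)^{m}(p_i-aq_i)^{k-1}}{q_i^{m+k-2}},
\end{equation*}
and the analogous single-factor version with exponent $k$; substituting these into the two sums produces exactly the right-hand side of (\ref{cd_tm4_rez1}). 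The parity hypotheses and the reversal statements transfer verbatim from Theorem \ref{tm4}, since the substitution above does not alter the sign structure in any way.

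There is no real obstacle: the proof is a mechanical specialization, and the only point requiring care is the bookkeeping for the $q_i$ powers when absorbing $q_i$ into the expanded products, which is routine. Finally, the condition $a\le 1\le b$ simply ensures that $\bar x=1$ lies in $[a,b]$ so that Theorem \ref{tm4} is applicable, and the assumption $p_i/q_i\in[a,b]$ guarantees $g(E)\subseteq[a,b]$; both hypotheses are already in the statement of the theorem.
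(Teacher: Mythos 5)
Your proposal is correct and is exactly the argument the paper intends: the paper gives no separate proof for this theorem, stating only that it follows from Theorem \ref{tm4} ``in the analogous way'' as Theorem \ref{cd_tm2} follows from Theorem \ref{tm2}, and your substitutions $E=\{1,\dots,r\}$, $A(\boldsymbol{x})=\sum_i p_ix_i$, then $p_i\leftarrow q_i$, $x_i=p_i/q_i$ with the bookkeeping of the $q_i$ powers reproduce that route faithfully.
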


Another generalization of the Edmundson-Lah-Ribari\v c inequality, which provides us with a lower and an upper bound for the generalized $f$-divergence functional, is given in the following theorem.

\begin{tm}\label{cd_tm3}
Let $[a,b]\subset \mathbb{R}$ be an interval such that $a\le 1\le b$. Let $f\in \mathcal{C}^n([a,b])$ and let $\boldsymbol{p}=(p_1,...,p_r)$ and $\boldsymbol{p}=(q_1,...,q_r)$ be probability distributions such that $p_i/q_i\in [a,b]$ for every $i=1,...,r$. If the function $f$ is $n$-convex and if $n$ is odd, then we have
\begin{align}
    & \sum_{k=2}^{n-1}f[a;\underbrace{b,b,...,b}_{k \ times}]\sum_{i=1}^r\dfrac{(p_i-aq_i)(p_i-bq_i)^{k-1}}{q_i^{k-1}}
    \le \dfrac{b-1}{b-a}f(a)+\dfrac{1-a}{b-a}f(b)-\tilde{D}_f(\boldsymbol{p},\boldsymbol{q})\nonumber\\
    \le &f[a,a;b]\sum_{i=1}^r\dfrac{(p_i-aq_i)(p_i-bq_i)}{q_i}+\sum_{k=2}^{n-2}f[a,a;\underbrace{b,...,b}_{k \ times}]\sum_{i=1}^r\dfrac{(p_i-aq_i)^2(p_i-bq_i)^{k-1}}{q_i^k}.\label{cd_tm3_rez1}
\end{align}
Inequalities (\ref{cd_tm3_rez1}) also hold when the function $f$ is $n$-concave and $n$ is even. In case when the function $f$ is $n$-convex and $n$ is even, or when the function $f$ is $n$-concave and $n$ is odd, the inequality signs in (\ref{cd_tm3_rez1}) are reversed.
\end{tm}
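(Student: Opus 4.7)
The plan is to deduce (\ref{cd_tm3_rez1}) directly from Theorem~\ref{tm3} by instantiating the abstract functional framework in the discrete $f$-divergence setting, exactly as was done for Theorem~\ref{cd_tm2}. Take $E = \{1, 2, \ldots, r\}$, let $L$ be the vector space of all real-valued functions on $E$, and define the positive linear functional $A \colon L \to \mathbb{R}$ by $A(\boldsymbol{x}) = \sum_{i=1}^r q_i x_i$. Since $\boldsymbol{q}$ is a probability distribution, the normalization $A(\boldsymbol{1}) = 1$ is immediate, and conditions (L1), (L2) are trivial in this discrete setting. Choose $g \in L$ by $g(i) = p_i/q_i$; the hypothesis $p_i/q_i \in [a,b]$ gives $g(E) \subseteq [a,b]$, so $f \circ g \in L$.

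A short computation then produces $A(g) = \sum_{i=1}^r q_i (p_i/q_i) = 1$ and $A(f \circ g) = \sum_{i=1}^r q_i f(p_i/q_i) = \tilde{D}_f(\boldsymbol{p}, \boldsymbol{q})$, which identifies the middle expression of (\ref{cd_tm3_rez1}) with the corresponding quantity involving $LR(f, g, a, b, A)$. Using $g(i) - a = (p_i - aq_i)/q_i$ and $g(i) - b = (p_i - bq_i)/q_i$, the moment $A[(g-a\boldsymbol{1})(g-b\boldsymbol{1})^{k-1}]$ evaluates to $\sum_{i=1}^r (p_i - aq_i)(p_i - bq_i)^{k-1}/q_i^{k-1}$, while $A[(g-a\boldsymbol{1})^2(g-b\boldsymbol{1})^{k-1}]$ evaluates to $\sum_{i=1}^r (p_i - aq_i)^2 (p_i - bq_i)^{k-1}/q_i^k$. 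Substituting these into the two ends of the chain (\ref{tm3_rez1}) yields (\ref{cd_tm3_rez1}), and the parity assumption on $n$ together with the convexity/concavity hypothesis transfer verbatim from Theorem~\ref{tm3}.

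There is no serious conceptual obstacle. The only point requiring care is the difference between the powers of $q_i$ in the two denominators ($q_i^{k-1}$ in the lower bound versus $q_i^k$ in the upper bound), which comes from the extra factor $(g - a\boldsymbol{1})$ present in the upper-bound moments. Once these powers are tracked correctly, the result is an immediate specialization of Theorem~\ref{tm3}, entirely parallel to the passage from Theorem~\ref{tm2} to Theorem~\ref{cd_tm2}.
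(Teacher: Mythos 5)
Your proposal is correct in substance and is essentially the paper's own argument: the paper's proof of Theorem~\ref{cd_tm3} is the one-line instruction to start from (\ref{tm3_rez1}) and repeat the substitution used for Theorem~\ref{cd_tm2}, namely $A(\boldsymbol{x})=\sum_{i=1}^r q_i x_i$ and $g(i)=p_i/q_i$, which is exactly what you do; your evaluation of the moments $A[(g-a\boldsymbol{1})(g-b\boldsymbol{1})^{k-1}]$ and $A[(g-a\boldsymbol{1})^2(g-b\boldsymbol{1})^{k-1}]$, including the distinction between the powers $q_i^{k-1}$ and $q_i^{k}$ in the denominators, is accurate.

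One point you pass over (as does the paper): with $A(g)=1$, the quantity $LR(f,g,a,b,A)$ defined in (\ref{LRoznaka}) equals $\tilde{D}_f(\boldsymbol{p},\boldsymbol{q})-\frac{b-1}{b-a}f(a)-\frac{1-a}{b-a}f(b)$, whereas the middle term displayed in (\ref{cd_tm3_rez1}) is its negative. Hence the literal substitution into (\ref{tm3_rez1}) sandwiches $\tilde{D}_f(\boldsymbol{p},\boldsymbol{q})-\frac{b-1}{b-a}f(a)-\frac{1-a}{b-a}f(b)$ between the two displayed expressions, not the quantity printed in the theorem; the same sign slip already appears in the displayed chain in the proof of Theorem~\ref{cd_tm2}. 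If your write-up is meant to establish (\ref{cd_tm3_rez1}) exactly as printed, you should either flag this as a typo in the statement or carry out the sign reversal (with the attendant reversal and swap of the two bounds) explicitly rather than asserting that the substitution ``yields (\ref{cd_tm3_rez1})'' outright.
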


\begin{proof}
We start with inequalities (\ref{tm3_rez1}) from Theorem \ref{tm3}, and follow the steps from the proof of Theorem \ref{cd_tm2}.
\end{proof}

By utilizing Theorem \ref{tm5} in an analogue way, we can get similar bounds for the generalized $f$-divergence functional that hold for all $n\in \mathbb{N}$, not only the odd ones.

\begin{tm}\label{cd_tm5}
Let $[a,b]\subset \mathbb{R}$ be an interval such that $a\le 1\le b$. Let $f\in \mathcal{C}^n([a,b])$ and let $\boldsymbol{p}=(p_1,...,p_r)$ and $\boldsymbol{p}=(q_1,...,q_r)$ be probability distributions such that $p_i/q_i\in [a,b]$ for every $i=1,...,r$. If the function $f$ is $n$-convex, then we have
\begin{align}
  &f[b,b;a]\sum_{i=1}^r\dfrac{(p_i-aq_i)(p_i-bq_i)}{q_i}+\sum_{k=2}^{n-2}f[b,b;\underbrace{a,a,...,a}_{k \ times}]\sum_{i=1}^r\dfrac{(p_i-aq_i)^{k-1}(p_i-bq_i)^2}{q_i^k}\nonumber \\
    \le &\dfrac{b-1}{b-a}f(a)+\dfrac{1-a}{b-a}f(b)-\tilde{D}_f(\boldsymbol{p},\boldsymbol{q})
    \le  \sum_{k=2}^{n-1}f[b;\underbrace{a,...,a}_{k \ times}]\sum_{i=1}^r\dfrac{(p_i-aq_i)^{k-1}(p_i-bq_i)}{q_i^{k-1}}.\label{cd_tm5_rez1}
\end{align}
If the function $f$ is $n$-concave, the inequality signs in (\ref{cd_tm5_rez1}) are reversed.
\end{tm}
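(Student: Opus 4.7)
The plan is to deduce (\ref{cd_tm5_rez1}) from Theorem \ref{tm5} by a concrete choice of the vector space $L$, the positive linear functional $A$ and the test function $g$. This mirrors the construction used in the proof of Theorem \ref{cd_tm2}; as already suggested by the one-line proof of Theorem \ref{cd_tm3}, the entire argument reduces to this translation step.

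First I would take $E=\{1,\ldots,r\}$, let $L$ be the space of all real-valued functions on $E$ (which trivially satisfies (L1) and (L2)), and define $A\colon L\to\mathbb{R}$ by $A(\boldsymbol{x})=\sum_{i=1}^{r}q_ix_i$ together with $g\in L$ by $g(i)=p_i/q_i$. Then $A$ is positive, $A(\boldsymbol{1})=\sum_{i}q_i=1$ since $\boldsymbol{q}$ is a probability distribution, and the hypothesis $p_i/q_i\in[a,b]$ guarantees $g(E)\subseteq[a,b]$, so $f\circ g\in L$ and Theorem \ref{tm5} applies.

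The remaining steps are computational. One has
$$A(g)=\sum_{i=1}^{r}p_i=1,\qquad A(f(g))=\sum_{i=1}^{r}q_i\,f\!\left(\frac{p_i}{q_i}\right)=\tilde{D}_f(\boldsymbol{p},\boldsymbol{q}),$$
so that the middle term $LR(f,g,a,b,A)$ of (\ref{tm5_rez1}) becomes, up to sign, the central expression of (\ref{cd_tm5_rez1}). All of the mixed moments appearing on either side of (\ref{tm5_rez1}) are instances of the key identity
$$A\bigl[(g-a\boldsymbol{1})^{j}(g-b\boldsymbol{1})^{k}\bigr]=\sum_{i=1}^{r}\frac{(p_i-aq_i)^{j}\,(p_i-bq_i)^{k}}{q_i^{\,j+k-1}},$$
obtained by pulling a factor $q_i^{-1}$ out of each difference $p_i/q_i-a$ and $p_i/q_i-b$ and combining with the weight $q_i$. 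Substituting these expressions into (\ref{tm5_rez1}) converts it term by term into (\ref{cd_tm5_rez1}); the $n$-concave case is recovered by applying the $n$-convex case to $-f$, equivalently by invoking the reversed form of Theorem \ref{tm5}.

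There is no real obstacle beyond the notational bookkeeping: the only point needing care is the exponent of $q_i$ in the denominators, since a moment of total degree $j+k$ acquires the factor $q_i^{-(j+k-1)}$, which is precisely the pattern one reads off from the various sums in (\ref{cd_tm5_rez1}).
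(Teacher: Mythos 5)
Your construction is exactly the one the paper itself uses: the proof of Theorem \ref{cd_tm2} makes the same substitution $A(\boldsymbol{x})=\sum_i q_ix_i$, $g(i)=p_i/q_i$, and the proofs of Theorems \ref{cd_tm3} and \ref{cd_tm5} are declared to be the analogous specializations of Theorems \ref{tm3} and \ref{tm5}. Your normalization checks ($A(\boldsymbol{1})=1$, $A(g)=1$, $A(f(g))=\tilde{D}_f(\boldsymbol{p},\boldsymbol{q})$) and your moment identity
\begin{equation*}
A\bigl[(g-a\boldsymbol{1})^{j}(g-b\boldsymbol{1})^{k}\bigr]=\sum_{i=1}^{r}\frac{(p_i-aq_i)^{j}(p_i-bq_i)^{k}}{q_i^{\,j+k-1}}
\end{equation*}
are correct and match every denominator in (\ref{cd_tm5_rez1}).

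The one place where your argument is not airtight is the clause ``up to sign,'' which conceals a step that does not go through as you state it. By the definition (\ref{LRoznaka}), the substitution gives
\begin{equation*}
LR(f,g,a,b,A)=\tilde{D}_f(\boldsymbol{p},\boldsymbol{q})-\frac{b-1}{b-a}f(a)-\frac{1-a}{b-a}f(b),
\end{equation*}
which is the \emph{negative} of the middle expression in (\ref{cd_tm5_rez1}). So a term-by-term substitution into (\ref{tm5_rez1}) produces the chain with $-\bigl(\frac{b-1}{b-a}f(a)+\frac{1-a}{b-a}f(b)-\tilde{D}_f(\boldsymbol{p},\boldsymbol{q})\bigr)$ in the middle; to convert this into a statement about the printed middle expression you must multiply through by $-1$, which reverses both inequality signs and interchanges the roles of the lower and upper bounds. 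Your proposal asserts the conversion is ``term by term,'' which it is not. To be fair, the paper's own proof of Theorem \ref{cd_tm2} performs the same silent sign flip (it writes the inequality for $\frac{b-\bar{x}}{b-a}f(a)+\frac{\bar{x}-a}{b-a}f(b)-\sum_i p_if(x_i)$ while quoting a bound proved for $LR$), so you have faithfully reproduced the paper's argument together with its defect. A careful write-up should either keep $LR(f,g,a,b,A)$ itself as the middle term of (\ref{cd_tm5_rez1}), or keep the printed middle term and negate and swap the two bounding expressions; as it stands, the deduction from Theorem \ref{tm5} does not yield the inequality exactly as displayed.
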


\begin{primjer}\label{primjer}
Let $\boldsymbol{p}=(p_1,...,p_r)$ and $\boldsymbol{p}=(q_1,...,q_r)$ be probability distributions.
\begin{itemize}
\item[$\triangleright$] \textbf{Kullback-Leibler divergence} of the probability distributions $\boldsymbol{p}$ and $\boldsymbol{q}$ is defined as
$$D_{KL}(\boldsymbol{p},\boldsymbol{q})=\sum_{i=1}^rq_i\log \dfrac{q_i}{p_i},$$
and the corresponding generating function is $f(t)=t\log t, t>0$. We can calculate 
$$f^{(n)}(t)=(-1)^n(n-2)!t^{-(n-1)}.$$
It is clear that this function is $(2n-1)$-concave and $(2n)$-convex for any $n\in \mathbb{N}$.

\item[$\triangleright$] \textbf{Hellinger divergence} of the probability distributions $\boldsymbol{p}$ and $\boldsymbol{q}$ is defined as
$$D_{H}(\boldsymbol{p},\boldsymbol{q})=\dfrac{1}{2}\sum_{i=1}^n(\sqrt{q_i}-\sqrt{p_i})^2,$$
and the corresponding generating function is $f(t)=\frac{1}{2}(1-\sqrt{t})^2, t>0$. We see that 
$$f^{(n)}(t)=(-1)^n\frac{(2n-3)!!}{2^n}t^{-\frac{2n-1}{2}},$$ 
so function $f$ is $(2n-1)$-concave and $(2n)$-convex for any $n\in \mathbb{N}$.

\item[$\triangleright$] \textbf{Harmonic divergence} of the probability distributions $\boldsymbol{p}$ and $\boldsymbol{q}$ is defined as
$$D_{Ha}(\boldsymbol{p},\boldsymbol{q})=\sum_{i=1}^n\dfrac{2p_iq_i}{p_i+q_i},$$
and the corresponding generating function is $f(t)=\frac{2t}{1+t}$. We can calculate
$$f^{(n)}(t)=2(-1)^{n+1}n!(1+t)^{-(n+1)}.$$
Two cases need to be considered:
\begin{itemize}
    \item[$\ast$] if $t<-1$, then the function $f$ is $n$-convex for every $n\in \mathbb{N}$;
    \item[$\ast$] if $t>-1$, then the function $f$ is $(2n)$-concave and $(2n-1)$-convex for any $n\in \mathbb{N}$.
\end{itemize}

\item[$\triangleright$] \textbf{Jeffreys divergence} of the probability distributions $\boldsymbol{p}$ and $\boldsymbol{q}$ is defined as
$$D_{J}(\boldsymbol{p},\boldsymbol{q})=\dfrac{1}{2}\sum_{i=1}^n(q_i-p_i)\log \dfrac{q_i}{p_i},$$
and the corresponding generating function is $f(t)=(1-t)\log \frac{1}{t}, t>0$. After calculating, we see that 
$$f^{(n)}(t)=(-1)^{n+1}t^{-n}(n-1)!(1+nt).$$
Obviously, this function is $(2n-1)$-convex and $(2n)$-concave for any $n\in \mathbb{N}$.
\end{itemize}
It is clear that all of the results from this section can be applied to the special types of divergences mentioned in this example.
\end{primjer}

\section{Examples with Zipf and Zipf-Mandelbrot law}\label{sec4}

Zipf’s law \cite{zipf}, \cite{zipf1} has a significant application in a wide variety of scientific disciplines - from astronomy to demographics to software
structure to economics to zoology, and even to warfare \cite{war}. It is one of the basic laws in information science and bibliometrics, but it is also often used in linguistics.
Typically one is dealing with integer-valued observables (numbers of objects, people, cities, words, animals, corpses) and the frequency of their occurrence.

Probability mass function of Zipf's law with parameters $N\in \mathbb{N}$ and $s>0$ is:
\begin{equation*}
f(k;N,s)=\frac{1/k^s}{H_{N,s}}, \ \ \mathrm{where} \ \ H_{N,s}=\sum_{i=1}^N\frac{1}{i^s}.
\end{equation*}

Benoit Mandelbrot in 1966 gave an improvement of Zipf law for the count of the low-rank words. Various scientific fields use this law for different purposes, for example information sciences use it for indexing \cite{egg,sila}, ecological field studies in predictability of ecosystem \cite{moi}, in music it is used to determine aesthetically pleasing music \cite{mana}.

Zipf–Mandelbrot law is a discrete probability distribution
with parameters $N\in \mathbb{N}$, $q,s\in \mathbb{R}$ such that $q\ge 0$ and $s>0$, possible values $\{1, 2, ..., N\}$ and probability mass function
\begin{equation}\label{ZM}
f(i;N,q,s)=\frac{1/(i+q)^s}{H_{N,q,s}}, \ \ \mathrm{where} \ \ H_{N,q,s}=\sum_{i=1}^N\frac{1}{(i+q)^s}.
\end{equation}

Let $\boldsymbol{p}$ and $\boldsymbol{q}$ be Zipf-Mandelbrot laws with parameters $N\in \mathbb{N}$, $q_1,q_2\ge 0$ and $s_1,s_2>0$ respectively and let us denote
\begin{align}
H_{N,q_1,s_1}&=H_1, \ H_{N,q_2,s_2}=H_2\nonumber\\
a_{\boldsymbol{p},\boldsymbol{q}}:&=\mathrm{min}\left\{\dfrac{p_i}{q_i}\right\}=\dfrac{H_2}{H_1}\mathrm{min}\left\{\dfrac{(i+q_2)^{s_2}}{(i+q_1)^{s_1}}\right\}\nonumber\\
b_{\boldsymbol{p},\boldsymbol{q}}:&=\mathrm{max}\left\{\dfrac{p_i}{q_i}\right\}=\dfrac{H_2}{H_1}\mathrm{max}\left\{\dfrac{(i+q_2)^{s_2}}{(i+q_1)^{s_1}}\right\}\label{defmMKL}
\end{align}

In this section we utilize the results regarding Csisz\' ar divergence from the previous section in order to obtain different inequalities for the Zipf-Mandelbrot law. The following results are special cases of Theorems \ref{cd_tm2}, \ref{cd_tm4}, \ref{cd_tm3} and \ref{cd_tm5} respectively, and they gives us Edmundson-Lah-Ribari\v c type inequality for the generalized $f$-divergence of the Zipf–Mandelbrot law.

\begin{kor}\label{zm_kor2}
Let $\boldsymbol{p}$ and $\boldsymbol{q}$ be Zipf-Mandelbrot laws with parameters $N\in \mathbb{N}$, $q_1,q_2\ge 0$ and $s_1,s_2>0$ respectively, and let $H_1$, $H_2$,  $a_{\boldsymbol{p},\boldsymbol{q}}$ and $a_{\boldsymbol{p},\boldsymbol{q}}$ be defined in (\ref{defmMKL}). Let Let $f\in \mathcal{C}^n([a_{\boldsymbol{p},\boldsymbol{q}},b_{\boldsymbol{p},\boldsymbol{q}}])$ be a $n$-convex function. If $n$ and $3\le m\le n-1$ are of different parity, then 
\begin{align*}
   &\dfrac{b_{\boldsymbol{p},\boldsymbol{q}}-1}{b_{\boldsymbol{p},\boldsymbol{q}}-a_{\boldsymbol{p},\boldsymbol{q}}}f(a_{\boldsymbol{p},\boldsymbol{q}})+\dfrac{1-a_{\boldsymbol{p},\boldsymbol{q}}}{b_{\boldsymbol{p},\boldsymbol{q}}-a_{\boldsymbol{p},\boldsymbol{q}}}f(b_{\boldsymbol{p},\boldsymbol{q}})-\tilde{D}_f(\boldsymbol{p},\boldsymbol{q})\\
    \le &\left(1-a_{\boldsymbol{p},\boldsymbol{q}}\right)\left(f'(a_{\boldsymbol{p},\boldsymbol{q}})-f[a_{\boldsymbol{p},\boldsymbol{q}},b_{\boldsymbol{p},\boldsymbol{q}}]\right)+\sum_{k=2}^{m-1}\dfrac{f^{(k)}(a_{\boldsymbol{p},\boldsymbol{q}})}{H_2k!}\sum_{i=1}^r\dfrac{\left(\frac{H_2(i+q_2)^{s_2}}{H_1(i+q_1)^{s_1}}-a_{\boldsymbol{p},\boldsymbol{q}}\right)^{k}}{(i+q_2)^{s_2}}\nonumber\\
    &+\sum_{k=1}^{n-m}f[\underbrace{a_{\boldsymbol{p},\boldsymbol{q}},...,a_{\boldsymbol{p},\boldsymbol{q}}}_{m \ times};\underbrace{b_{\boldsymbol{p},\boldsymbol{q}},...,b_{\boldsymbol{p},\boldsymbol{q}}}_{k \ times}]\sum_{i=1}^r\dfrac{\left(\frac{H_2(i+q_2)^{s_2}}{H_1(i+q_1)^{s_1}}-a_{\boldsymbol{p},\boldsymbol{q}}\right)^{m}\left(\frac{H_2(i+q_2)^{s_2}}{H_1(i+q_1)^{s_1}}-b_{\boldsymbol{p},\boldsymbol{q}}\right)^{k-1}}{H_2(i+q_2)^{s_2}}. 
\end{align*}
This inequality also holds when the function $f$ is $n$-concave and $n$ and $m$ are of equal parity. In case when the function $f$ is $n$-convex and $n$ and $m$ are of equal parity, or when the function $f$ is $n$-concave and $n$ and $m$ are of different parity, the inequality sign is reversed.
\end{kor}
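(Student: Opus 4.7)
The plan is to derive this corollary as a direct specialization of Theorem \ref{cd_tm2}, taking $[a,b] = [a_{\boldsymbol{p},\boldsymbol{q}},b_{\boldsymbol{p},\boldsymbol{q}}]$ and letting $\boldsymbol{p},\boldsymbol{q}$ be the two Zipf--Mandelbrot distributions. First I would verify the two non-trivial hypotheses: by (\ref{ZM}) the entries are positive and sum to one, so $\boldsymbol{p},\boldsymbol{q}\in\mathbb{P}$; by the very definition (\ref{defmMKL}) the ratios $p_i/q_i$ lie in $[a_{\boldsymbol{p},\boldsymbol{q}},b_{\boldsymbol{p},\boldsymbol{q}}]$; and since
\[
\sum_{i=1}^{N} q_i\cdot\frac{p_i}{q_i}=\sum_{i=1}^{N} p_i = 1
\]
is a $\boldsymbol{q}$-weighted average of the ratios $p_i/q_i$, it must lie between their minimum and maximum, giving $a_{\boldsymbol{p},\boldsymbol{q}}\le 1\le b_{\boldsymbol{p},\boldsymbol{q}}$, as required by Theorem \ref{cd_tm2}.

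Next I would substitute the explicit form of the Zipf--Mandelbrot probabilities. Using
\[
\frac{p_i}{q_i} = \frac{H_2\,(i+q_2)^{s_2}}{H_1\,(i+q_1)^{s_1}},\qquad q_i=\frac{1}{H_2\,(i+q_2)^{s_2}},
\]
together with the elementary identity
\[
\frac{(p_i-aq_i)^{k}}{q_i^{k-1}} = q_i\!\left(\frac{p_i}{q_i}-a\right)^{\!k},\qquad \frac{(p_i-aq_i)^{m}(p_i-bq_i)^{k-1}}{q_i^{m+k-2}}=q_i\!\left(\frac{p_i}{q_i}-a\right)^{\!m}\!\left(\frac{p_i}{q_i}-b\right)^{\!k-1},
\]
each sum appearing in (\ref{cd_tm2_rez1}) becomes the corresponding sum in the statement, after pulling out the factor $1/[H_2(i+q_2)^{s_2}]$ that comes from $q_i$. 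The first-order term $f[a_{\boldsymbol{p},\boldsymbol{q}},a_{\boldsymbol{p},\boldsymbol{q}}]$ is rewritten as $f'(a_{\boldsymbol{p},\boldsymbol{q}})$ via the coincident-point convention for divided differences.

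Finally, the parity-dependent direction of the inequality, together with the reversed cases for $n$-concave $f$ or equal-parity $n,m$, transfers verbatim from Theorem \ref{cd_tm2}; no further argument is needed. There is no real obstacle here: the proof is a bookkeeping exercise in symbolic substitution, and the only step that requires a moment's thought is the verification $a_{\boldsymbol{p},\boldsymbol{q}}\le 1\le b_{\boldsymbol{p},\boldsymbol{q}}$ handled above.
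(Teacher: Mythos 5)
Your proposal is correct and follows exactly the route the paper intends: the paper offers no separate proof of Corollary \ref{zm_kor2}, simply noting that it is the special case of Theorem \ref{cd_tm2} obtained by taking $\boldsymbol{p},\boldsymbol{q}$ to be the Zipf--Mandelbrot laws with $[a,b]=[a_{\boldsymbol{p},\boldsymbol{q}},b_{\boldsymbol{p},\boldsymbol{q}}]$. Your added verifications (that $a_{\boldsymbol{p},\boldsymbol{q}}\le 1\le b_{\boldsymbol{p},\boldsymbol{q}}$ since $1=\sum_i q_i(p_i/q_i)$ is a weighted mean of the ratios, the substitution $q_i=1/[H_2(i+q_2)^{s_2}]$, and $f[a,a]=f'(a)$) are exactly the bookkeeping the paper leaves implicit.
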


\begin{kor}\label{zm_kor4}
Let $\boldsymbol{p}$ and $\boldsymbol{q}$ be Zipf-Mandelbrot laws with parameters $N\in \mathbb{N}$, $q_1,q_2\ge 0$ and $s_1,s_2>0$ respectively, and let $H_1$, $H_2$,  $a_{\boldsymbol{p},\boldsymbol{q}}$ and $a_{\boldsymbol{p},\boldsymbol{q}}$ be defined in (\ref{defmMKL}). Let Let $f\in \mathcal{C}^n([a_{\boldsymbol{p},\boldsymbol{q}},b_{\boldsymbol{p},\boldsymbol{q}}])$ be a $n$-convex function and let $3\le m\le n-1$ be of different parity. Then 
\begin{align*}
     &\dfrac{b_{\boldsymbol{p},\boldsymbol{q}}-1}{b_{\boldsymbol{p},\boldsymbol{q}}-a_{\boldsymbol{p},\boldsymbol{q}}}f(a_{\boldsymbol{p},\boldsymbol{q}})+\dfrac{1-a_{\boldsymbol{p},\boldsymbol{q}}}{b_{\boldsymbol{p},\boldsymbol{q}}-a_{\boldsymbol{p},\boldsymbol{q}}}f(b_{\boldsymbol{p},\boldsymbol{q}})-\tilde{D}_f(\boldsymbol{p},\boldsymbol{q})\\
   \le &(b_{\boldsymbol{p},\boldsymbol{q}}-1)\left(f[a_{\boldsymbol{p},\boldsymbol{q}},b_{\boldsymbol{p},\boldsymbol{q}}]-f'(b_{\boldsymbol{p},\boldsymbol{q}})\right)+\sum_{k=2}^{m-1}\dfrac{f^{(k)}(b_{\boldsymbol{p},\boldsymbol{q}})}{H_2k!}\sum_{i=1}^r\dfrac{\left(\frac{H_2(i+q_2)^{s_2}}{H_1(i+q_1)^{s_1}}-b_{\boldsymbol{p},\boldsymbol{q}}\right)^{k}}{(i+q_2)^{s_2}}\nonumber\\
     &+\sum_{k=1}^{n-m}f[\underbrace{b_{\boldsymbol{p},\boldsymbol{q}},...,b_{\boldsymbol{p},\boldsymbol{q}}}_{m \ times};\underbrace{a_{\boldsymbol{p},\boldsymbol{q}},...,a_{\boldsymbol{p},\boldsymbol{q}}}_{k \ times}]\sum_{i=1}^r\dfrac{\left(\frac{H_2(i+q_2)^{s_2}}{H_1(i+q_1)^{s_1}}-b_{\boldsymbol{p},\boldsymbol{q}}\right)^{m}\left(\frac{H_2(i+q_2)^{s_2}}{H_1(i+q_1)^{s_1}}-a_{\boldsymbol{p},\boldsymbol{q}}\right)^{k-1}}{H_2(i+q_2)^{s_2}}.
\end{align*}
The inequality above also holds when the function $f$ is $n$-concave and $m$ is even. In case when the function $f$ is $n$-convex and $m$ is even, or when the function $f$ is $n$-concave and $m$ is odd, the inequality sign is reversed.
\end{kor}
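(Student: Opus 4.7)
The plan is to obtain the statement as a direct specialization of Theorem \ref{cd_tm4} to Zipf--Mandelbrot distributions. First, I would verify that the hypotheses of Theorem \ref{cd_tm4} are satisfied on the interval $[a_{\boldsymbol{p},\boldsymbol{q}},b_{\boldsymbol{p},\boldsymbol{q}}]$. By the very definition of $a_{\boldsymbol{p},\boldsymbol{q}}$ and $b_{\boldsymbol{p},\boldsymbol{q}}$ in \eqref{defmMKL}, the ratios $p_i/q_i$ lie in $[a_{\boldsymbol{p},\boldsymbol{q}},b_{\boldsymbol{p},\boldsymbol{q}}]$ for every $i=1,\ldots,N$. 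Moreover, since $\sum_{i=1}^N p_i = \sum_{i=1}^N q_i = 1$, the mean ratio $\sum_i q_i(p_i/q_i) = 1$ must belong to the interval, which gives the required condition $a_{\boldsymbol{p},\boldsymbol{q}}\le 1\le b_{\boldsymbol{p},\boldsymbol{q}}$.

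Next, I would substitute $a \leftrightarrow a_{\boldsymbol{p},\boldsymbol{q}}$ and $b \leftrightarrow b_{\boldsymbol{p},\boldsymbol{q}}$ in \eqref{cd_tm4_rez1} together with the explicit Zipf--Mandelbrot probabilities
\[
p_i = \frac{1}{H_1(i+q_1)^{s_1}}, \qquad q_i = \frac{1}{H_2(i+q_2)^{s_2}},
\]
so that
\[
\frac{p_i}{q_i} = \frac{H_2(i+q_2)^{s_2}}{H_1(i+q_1)^{s_1}}.
\]
Each summand of the type $(p_i - b q_i)^k/q_i^{k-1}$ appearing in \eqref{cd_tm4_rez1} is then rewritten as $q_i\,(p_i/q_i - b_{\boldsymbol{p},\boldsymbol{q}})^k$ and the factor $q_i$ is turned into $1/(H_2(i+q_2)^{s_2})$; the mixed products $(p_i-bq_i)^m(p_i-aq_i)^{k-1}/q_i^{m+k-2}$ are transformed in exactly the same fashion. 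This yields the precise sums appearing on the right-hand side of the corollary.

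Finally, I would simplify using $f[a_{\boldsymbol{p},\boldsymbol{q}},a_{\boldsymbol{p},\boldsymbol{q}}] = f'(a_{\boldsymbol{p},\boldsymbol{q}})$ and $f[b_{\boldsymbol{p},\boldsymbol{q}},b_{\boldsymbol{p},\boldsymbol{q}}] = f'(b_{\boldsymbol{p},\boldsymbol{q}})$, so that the leading term $(b-1)(f[a,b]-f[b,b])$ in \eqref{cd_tm4_rez1} becomes $(b_{\boldsymbol{p},\boldsymbol{q}}-1)(f[a_{\boldsymbol{p},\boldsymbol{q}},b_{\boldsymbol{p},\boldsymbol{q}}]-f'(b_{\boldsymbol{p},\boldsymbol{q}}))$ as stated. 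The parity dichotomy between $n$-convexity / $n$-concavity and $m$ odd / $m$ even is inherited verbatim from Theorem \ref{cd_tm4}.

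There is no real obstacle here; this is a direct specialization, and the only care needed is the bookkeeping of the $H_1$, $H_2$ factors together with the exponents of $(i+q_j)^{s_j}$ when converting sums of the form $\sum_i (p_i - cq_i)^k/q_i^{k-1}$ into their Zipf--Mandelbrot shape, and tracking the parity conditions through the substitution.
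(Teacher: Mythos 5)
Your proposal is correct and coincides with the paper's own route: the paper states these corollaries as direct special cases of Theorem \ref{cd_tm4}, obtained by taking $a=a_{\boldsymbol{p},\boldsymbol{q}}$, $b=b_{\boldsymbol{p},\boldsymbol{q}}$, substituting the Zipf--Mandelbrot mass functions, and using $f[b,b]=f'(b)$. Your explicit check that $1=\sum_i q_i(p_i/q_i)$ forces $a_{\boldsymbol{p},\boldsymbol{q}}\le 1\le b_{\boldsymbol{p},\boldsymbol{q}}$ is a detail the paper leaves implicit, and is a welcome addition.
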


\begin{kor}\label{zm_kor3}
Let $\boldsymbol{p}$ and $\boldsymbol{q}$ be Zipf-Mandelbrot laws with parameters $N\in \mathbb{N}$, $q_1,q_2\ge 0$ and $s_1,s_2>0$ respectively, and let $H_1$, $H_2$,  $a_{\boldsymbol{p},\boldsymbol{q}}$ and $a_{\boldsymbol{p},\boldsymbol{q}}$ be defined in (\ref{defmMKL}). Let Let $f\in \mathcal{C}^n([a_{\boldsymbol{p},\boldsymbol{q}},b_{\boldsymbol{p},\boldsymbol{q}}])$ be a $n$-convex function. If $n$ is odd, then we have
\begin{align*}
    &\sum_{k=2}^{n-1}f[a_{\boldsymbol{p},\boldsymbol{q}};\underbrace{b_{\boldsymbol{p},\boldsymbol{q}},...,b_{\boldsymbol{p},\boldsymbol{q}}}_{k \ times}]\sum_{i=1}^r\dfrac{\left(\frac{H_2(i+q_2)^{s_2}}{H_1(i+q_1)^{s_1}}-a_{\boldsymbol{p},\boldsymbol{q}}\right)\left(\frac{H_2(i+q_2)^{s_2}}{H_1(i+q_1)^{s_1}}-b_{\boldsymbol{p},\boldsymbol{q}}\right)^{k-1}}{H_2(i+q_2)^{s_2}}\\
    \le      &\dfrac{b_{\boldsymbol{p},\boldsymbol{q}}-1}{b_{\boldsymbol{p},\boldsymbol{q}}-a_{\boldsymbol{p},\boldsymbol{q}}}f(a_{\boldsymbol{p},\boldsymbol{q}})+\dfrac{1-a_{\boldsymbol{p},\boldsymbol{q}}}{b_{\boldsymbol{p},\boldsymbol{q}}-a_{\boldsymbol{p},\boldsymbol{q}}}f(b_{\boldsymbol{p},\boldsymbol{q}})-\tilde{D}_f(\boldsymbol{p},\boldsymbol{q})\\
    \le &f[a_{\boldsymbol{p},\boldsymbol{q}},a_{\boldsymbol{p},\boldsymbol{q}};b_{\boldsymbol{p},\boldsymbol{q}}]\sum_{i=1}^r\dfrac{\left(\frac{H_2(i+q_2)^{s_2}}{H_1(i+q_1)^{s_1}}-a_{\boldsymbol{p},\boldsymbol{q}}\right)\left(\frac{H_2(i+q_2)^{s_2}}{H_1(i+q_1)^{s_1}}-b_{\boldsymbol{p},\boldsymbol{q}}\right)}{H_2(i+q_2)^{s_2}}\nonumber \\
& \ \ \ \ +\sum_{k=2}^{n-2}f[a_{\boldsymbol{p},\boldsymbol{q}},a_{\boldsymbol{p},\boldsymbol{q}};\underbrace{b_{\boldsymbol{p},\boldsymbol{q}},...,b_{\boldsymbol{p},\boldsymbol{q}}}_{k \ times}]\sum_{i=1}^r\dfrac{\left(\frac{H_2(i+q_2)^{s_2}}{H_1(i+q_1)^{s_1}}-a_{\boldsymbol{p},\boldsymbol{q}}\right)^2\left(\frac{H_2(i+q_2)^{s_2}}{H_1(i+q_1)^{s_1}}-b_{\boldsymbol{p},\boldsymbol{q}}\right)^{k-1}}{H_2(i+q_2)^{s_2}}.
\end{align*}
Stated inequalities also hold when the function $f$ is $n$-concave and $n$ is even. In case when the function $f$ is $n$-convex and $n$ is even, or when the function $f$ is $n$-concave and $n$ is odd, the inequality signs are reversed.
\end{kor}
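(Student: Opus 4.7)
The plan is to obtain Corollary \ref{zm_kor3} as a direct specialization of Theorem \ref{cd_tm3}, so the work is largely notational rather than analytic. First I would record the data: by (\ref{ZM}) the Zipf--Mandelbrot mass functions give $p_i = 1/(H_1(i+q_1)^{s_1})$ and $q_i = 1/(H_2(i+q_2)^{s_2})$, so the ratios take the explicit form
$$\frac{p_i}{q_i}=\frac{H_2(i+q_2)^{s_2}}{H_1(i+q_1)^{s_1}}.$$
By the very definition (\ref{defmMKL}) of $a_{\boldsymbol{p},\boldsymbol{q}}$ and $b_{\boldsymbol{p},\boldsymbol{q}}$, these ratios all lie in $[a_{\boldsymbol{p},\boldsymbol{q}},b_{\boldsymbol{p},\boldsymbol{q}}]$, and the $n$-convex function $f$ is assumed to be $\mathcal{C}^n$ on precisely this interval.

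Next I would verify the structural hypothesis of Theorem \ref{cd_tm3}, namely that $a_{\boldsymbol{p},\boldsymbol{q}}\le 1\le b_{\boldsymbol{p},\boldsymbol{q}}$. This is immediate: since $\sum_{i=1}^N p_i=\sum_{i=1}^N q_i=1$, we have $1=\sum_{i=1}^N q_i\cdot (p_i/q_i)$, expressing $1$ as a convex combination of the ratios $p_i/q_i$, which therefore lies between their minimum and maximum.

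With these two observations, I would invoke Theorem \ref{cd_tm3} on the interval $[a_{\boldsymbol{p},\boldsymbol{q}},b_{\boldsymbol{p},\boldsymbol{q}}]$ with the specified $\boldsymbol{p}$ and $\boldsymbol{q}$. The inequality chain then holds verbatim; only the individual summands require rewriting. For any exponents $m,k$, the generic term simplifies as
$$\frac{(p_i-aq_i)^{m}(p_i-bq_i)^{k-1}}{q_i^{m+k-2}}=q_i\left(\frac{p_i}{q_i}-a\right)^{m}\left(\frac{p_i}{q_i}-b\right)^{k-1},$$
after which substituting $q_i=1/(H_2(i+q_2)^{s_2})$ and the explicit form of $p_i/q_i$ produces precisely the denominators $H_2(i+q_2)^{s_2}$ and the bracketed differences displayed in the statement.

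The parity-dependent reversals of inequality carry over directly from Theorem \ref{cd_tm3}, since nothing in the specialization affects the sign analysis performed there. The main ``obstacle'' is therefore purely bookkeeping: making the substitution $q_i \mapsto 1/(H_2(i+q_2)^{s_2})$ consistent across every summand and displaying the result in the cumbersome but unambiguous notation used in the statement. No fresh analytic ingredient beyond Theorem \ref{cd_tm3} and the convex-combination remark is required.
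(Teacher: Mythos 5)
Your proposal is correct and coincides with the paper's own (implicit) argument: the paper simply declares Corollary \ref{zm_kor3} to be the specialization of Theorem \ref{cd_tm3} to Zipf--Mandelbrot laws, and your substitution $p_i/q_i = H_2(i+q_2)^{s_2}/(H_1(i+q_1)^{s_1})$ together with the identity $\frac{(p_i-aq_i)^{m}(p_i-bq_i)^{k-1}}{q_i^{m+k-2}}=q_i\bigl(\tfrac{p_i}{q_i}-a\bigr)^{m}\bigl(\tfrac{p_i}{q_i}-b\bigr)^{k-1}$ is exactly the required bookkeeping. Your explicit verification that $a_{\boldsymbol{p},\boldsymbol{q}}\le 1\le b_{\boldsymbol{p},\boldsymbol{q}}$ via the convex-combination remark is a hypothesis check the paper leaves tacit, and it is welcome.
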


\begin{kor}\label{zm_kor5}
Let $\boldsymbol{p}$ and $\boldsymbol{q}$ be Zipf-Mandelbrot laws with parameters $N\in \mathbb{N}$, $q_1,q_2\ge 0$ and $s_1,s_2>0$ respectively, and let $H_1$, $H_2$,  $a_{\boldsymbol{p},\boldsymbol{q}}$ and $a_{\boldsymbol{p},\boldsymbol{q}}$ be defined in (\ref{defmMKL}). Let Let $f\in \mathcal{C}^n([a_{\boldsymbol{p},\boldsymbol{q}},b_{\boldsymbol{p},\boldsymbol{q}}])$ be a $n$-convex function. Then we have
\begin{align*}
  &f[b_{\boldsymbol{p},\boldsymbol{q}},b_{\boldsymbol{p},\boldsymbol{q}};a_{\boldsymbol{p},\boldsymbol{q}}]\sum_{i=1}^r\dfrac{\left(\frac{H_2(i+q_2)^{s_2}}{H_1(i+q_1)^{s_1}}-a_{\boldsymbol{p},\boldsymbol{q}}\right)\left(\frac{H_2(i+q_2)^{s_2}}{H_1(i+q_1)^{s_1}}-b_{\boldsymbol{p},\boldsymbol{q}}\right)}{H_2(i+q_2)^{s_2}}\\
& \ \ \ \ +\sum_{k=2}^{n-2}f[b_{\boldsymbol{p},\boldsymbol{q}},b_{\boldsymbol{p},\boldsymbol{q}};\underbrace{a_{\boldsymbol{p},\boldsymbol{q}},...,a_{\boldsymbol{p},\boldsymbol{q}}}_{k \ times}]\sum_{i=1}^r\dfrac{\left(\frac{H_2(i+q_2)^{s_2}}{H_1(i+q_1)^{s_1}}-a_{\boldsymbol{p},\boldsymbol{q}}\right)^{k-1}\left(\frac{H_2(i+q_2)^{s_2}}{H_1(i+q_1)^{s_1}}-b_{\boldsymbol{p},\boldsymbol{q}}\right)^2}{H_2(i+q_2)^{s_2}}\nonumber\\
    \le &\dfrac{b-1}{b-a}f(a)+\dfrac{1-a}{b-a}f(b)-\tilde{D}_f(\boldsymbol{p},\boldsymbol{q})\nonumber\\ 
  \le  &\sum_{k=2}^{n-1}f[b_{\boldsymbol{p},\boldsymbol{q}};\underbrace{a_{\boldsymbol{p},\boldsymbol{q}},...,a_{\boldsymbol{p},\boldsymbol{q}}}_{k \ times}]\sum_{i=1}^r\dfrac{\left(\frac{H_2(i+q_2)^{s_2}}{H_1(i+q_1)^{s_1}}-a_{\boldsymbol{p},\boldsymbol{q}}\right)^{k-1}\left(\frac{H_2(i+q_2)^{s_2}}{H_1(i+q_1)^{s_1}}-b_{\boldsymbol{p},\boldsymbol{q}}\right)}{H_2(i+q_2)^{s_2}}.
\end{align*}
If the function $f$ is $n$-concave, the inequality signs are reversed.
\end{kor}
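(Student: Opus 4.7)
The plan is to obtain the claimed inequalities as an immediate specialization of Theorem \ref{cd_tm5} to the case where $\boldsymbol{p}$ and $\boldsymbol{q}$ are the Zipf--Mandelbrot probability mass functions defined in (\ref{ZM}). First I would verify that the hypotheses of Theorem \ref{cd_tm5} are satisfied with the interval $[a,b]=[a_{\boldsymbol{p},\boldsymbol{q}},b_{\boldsymbol{p},\boldsymbol{q}}]$: by construction (\ref{defmMKL}) the ratios $p_i/q_i$ lie in this interval, and because $\sum_i p_i=\sum_i q_i=1$ the ratios $p_i/q_i$ must straddle $1$, giving $a_{\boldsymbol{p},\boldsymbol{q}}\le 1\le b_{\boldsymbol{p},\boldsymbol{q}}$ as required.

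Next I would plug the explicit Zipf--Mandelbrot values $p_i=1/[H_1(i+q_1)^{s_1}]$ and $q_i=1/[H_2(i+q_2)^{s_2}]$ into the sums in (\ref{cd_tm5_rez1}). The key algebraic identity I would use is
\begin{equation*}
\frac{(p_i-\alpha q_i)^{j}(p_i-\beta q_i)^{k}}{q_i^{j+k-1}}=q_i\left(\frac{p_i}{q_i}-\alpha\right)^{j}\left(\frac{p_i}{q_i}-\beta\right)^{k},
\end{equation*}
applied with $(\alpha,\beta)\in\{(a_{\boldsymbol{p},\boldsymbol{q}},b_{\boldsymbol{p},\boldsymbol{q}}),(b_{\boldsymbol{p},\boldsymbol{q}},a_{\boldsymbol{p},\boldsymbol{q}})\}$. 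Since $q_i=1/[H_2(i+q_2)^{s_2}]$ and $p_i/q_i=H_2(i+q_2)^{s_2}/[H_1(i+q_1)^{s_1}]$, each summand of (\ref{cd_tm5_rez1}) then collapses to the form displayed in the corollary, with the common denominator $H_2(i+q_2)^{s_2}$ supplied by the factor $q_i$.

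The only point requiring care is the bookkeeping of powers of $q_i$ in the three types of sums that appear: the quadratic $(g-b\boldsymbol{1})(g-a\boldsymbol{1})$ term, the higher-order $(g-b\boldsymbol{1})^{2}(g-a\boldsymbol{1})^{k-1}$ terms in the lower bound, and the $(g-b\boldsymbol{1})(g-a\boldsymbol{1})^{k-1}$ terms in the upper bound. In each case one checks that the identity above absorbs exactly the right power of $q_i$ so that a single factor of $q_i$ survives, producing the stated denominator. No genuine obstacle arises; the argument is a routine substitution, and the reversed inequalities for $n$-concave $f$ are inherited directly from the corresponding statement in Theorem \ref{cd_tm5}.
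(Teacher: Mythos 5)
Your proposal is correct and follows exactly the route the paper intends: the corollary is stated as a direct specialization of Theorem \ref{cd_tm5} to Zipf--Mandelbrot laws, and your substitution $p_i=1/[H_1(i+q_1)^{s_1}]$, $q_i=1/[H_2(i+q_2)^{s_2}]$ together with the power-counting identity for $(p_i-\alpha q_i)^j(p_i-\beta q_i)^k/q_i^{j+k-1}$ is precisely the required computation. The observation that $\sum_i p_i=\sum_i q_i=1$ forces $a_{\boldsymbol{p},\boldsymbol{q}}\le 1\le b_{\boldsymbol{p},\boldsymbol{q}}$ correctly verifies the one hypothesis the paper leaves implicit.
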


\begin{rem}
By taking into consideration Example \ref{primjer} one can see that general results from this section can easily be applied to any of the following divergences: Kullback-Leibler divergence, Hellinger divergence, harmonic divergence or Jeffreys divergence.
\end{rem}

\section*{Acknowledgements}

The publication was supported by the Ministry of Education and Science of the Russian Federation (the Agreement
number No. 02.a03.21.0008.)

\end{document}